\numberwithin{equation}{section}
\newtheorem{lemma}{Lemma}[section]
   \newtheorem{theorem}{Theorem}
   \newtheorem{re}{Remark}[section]
   \newtheorem{prop}{Proposition}[section]
\newcommand{\al}{\alpha}
\newcommand{\e}{\epsilon}
   \newtheorem{coro}{Corollary}[section]
\numberwithin{equation}{section}
\numberwithin{theorem}{section}
\numberwithin{prop}{section}
\numberwithin{lemma}{section}
\numberwithin{re}{section}
\numberwithin{coro}{section}
\newcommand{\R}{\mathbb{R}}
\newcommand{\cat}{\mathcal{T}}
\newcommand{\caa}{\mathcal{A}}
\subjclass[2000]{35Q35, 35Q51}
\keywords{stability, solitary waves,  Camassa-Holm equation, Degasperis-Procesi equation, spectrum}
\thanks{ Email: \dag liji@hust.edu.cn, \ddag yliu@uta.edu, \S {wuq@ohio.edu}. }
\begin{document}

\title[Stability of DP solitary waves]{Orbital Stability of smooth solitary waves for the Degasperis-Procesi Equation}
\today

\maketitle

\centerline{\scshape Ji Li$^{\,\dag}$, Yue Liu$^{\,\ddag}$ and Qiliang Wu$^{\,\S,*}$}
\medskip
{\footnotesize
\centerline{\dag. School of Mathematics and Statistics, Huazhong University of Science and Technology}
   \centerline{Wuhan, Hubei 430074, China}
 \centerline{\ddag. Department of Mathematics, University of Texas at Arlington}
  \centerline{Arlington, TX 76019-0408, USA}
\centerline{\S. Department of Mathematics, Ohio University}
  \centerline{Athens, OH, 45701, USA}
  \centerline{$*$ the corresponding author}
   }

\begin{abstract}
The  Degasperis-Procesi equation is the integrable Camassa-Holm-type model which is an asymptotic approximation for the unidirectional propagation of shallow water waves.
This work establishes the orbital stability of localized smooth solitary waves to the   Desgasperis-Procesi (DP) equation on the real line. 
The main difficulty stems from the fact that the translation symmetry for the DP equation gives rise to a conserved quantity equivalent to the $L^2$-norm, which by itself can not bound the higher-order nonlinear terms in the Lagrangian. The remedy is to observe that, given a sufficiently smooth initial condition satisfying a measurable constraint,  the $L^\infty$ orbital norm of the perturbation is bounded above by a function of its $L^2$ orbital norm, yielding the orbital stability in the $L^2\cap L^\infty$ space.
\end{abstract}

\section{Introduction}
Sitting at the intersection of integrable systems and nonlinear hydrodynamic models of shallow water waves, the DP equation \cite{D-P},
\begin{equation}\label{DPk}
m_t+2ku_x+3mu_x+um_x=0,\quad x\in\R,  \;  t>0,
\end{equation}
together with 
the Korteweg-de Vries (KdV) equation \cite{KdV},
\begin{equation}
u_t+u_{xxx}+uu_x=0,
\end{equation}
and the Camassa-Holm (CH) equation \cite{CH, F-F}
\begin{equation} \label{CH}
m_t+2ku_x+2mu_x+um_x=0,
\end{equation}
where $m\triangleq u-u_{xx}$  is the momentum density and $ k > 0 $ is a parameter related to the critical shallow water speed,
has drawn much attention throughout the years. Chronologically, the KdV equation, arising in the study of various physical systems, is the prototypical example of soliton manifestation, from which the revolutionary soliton theory stemmed starting in the 1960s \cite{GGKM,Lax,ZF,ZK}. The link between these three equations was established in the same paper where the DP equation was first found: Degasperis and Procesi in 1999 \cite{D-P} showed that the KdV euqation, the CH equation and the DP equation are the only three integrable candidates passing the asymptotic integrability test in a broad family of third-order dispersive nonlinear PDEs. The link was also strengthened by Constantin and Lannes in 2009 \cite{cola} proving rigorously that both the CH equation and the DP equation are valid models of shallow-water waves over a flat bed. 

    Despite these strong similarities, they are indeed distinctively different from each other in various ways. For instance, wave breaking, a genuine nonlinear phenomena just like solitons, does not take place in the KdV equation due to the strong dispersive effect in the system, but manifests in the CH equation and the DP equation \cite{C-Ep, C-E, L-Y, Wh}. Interestingly, unlike KdV, the solitary waves of  the CH and DP equations (when $ k = 0 $) are peaked solitons (peakons) \cite{C-H-H,D-H-H, D-P}. Moreover, there are also major structural differences between the DP equation and the CH equation, such as that the DP equation admits a shock peakon, which is not found in the CH equation \cite{E-L-Y, Lu}; the isospectral problems in the Lax pair for the DP equation and the  CH equation are respectively of third-order \cite {D-H-H} and of second order \cite {CH}; and  the CH equation has geometric formulations \cite{C-K, Mi} while no such derivation is available for the DP equation. The list can go on but the difference we would like to emphasize here is that the conservation laws of the DP equations are in general weaker than those of the CH equation. As a result, any quantitative and qualitative study based upon the conservation laws, including the stability of solitons, is more subtle for the DP equation; see the work on the global existence, blow-up phenomena, and stability of peakons in the DP equation in  \cite{L-Y} for another example.

For solitons in the DP and CH equation, there are two distinctive scenarios, depending on the value of $k$.
\begin{itemize}
    \item In the limiting case of vanishing linear dispersion ($k=0$), smooth solitons degenerate into peaked solutions, called peakons, which are weak solutions and true solitons that interact via elastic collisions. Those peakons feature a  characteristic for the waves of great height
-- waves of the largest amplitude that are exact traveling wave solutions of the governing equations for irrotational water waves \cite{Cinvent,C-Eann}. The orbital stability of these peakons in the CH and DP equations has been verified respectively in \cite{C-S} and \cite{L-L,KM}. 
\item In the case of non-vanishing linear dispersion ($ k \neq 0$), while the orbital stability of smooth solitons of the CH equation is well understood by now \cite {CM,C-S3}, it is less clear for the DP equation. In fact, we established the spectral stability in our former work \cite{LLW} and the goal of this paper is to establish orbital stability of smooth solitons of the DP equation.
\end{itemize}

We first recall the existence result of smooth solitary waves established in \cite{LLW}. We call a solution $u(t,x)$ of the DP equation a \textit{solitary wave} if it is a localized \textit{traveling wave}; that is,
\[
u(t,x)=\phi(x-ct),
\]
where $c$ is a real number and $\phi:\R\to\R$ is a smooth function, satisfying the following additional properties.
\begin{itemize}
\item (Localization) $\lim\limits_{\xi\to\pm\infty}\phi(\xi)=0$.
\item (Monotonicity) There exists $\xi_0\in\R$ so that $\max\limits_{\xi\in\R}\phi(\xi)=\phi(\xi_0)$. Moreover, $\phi$ is strictly increasing on $(-\infty,\xi_0)$ and strictly decreasing on $(\xi_0,\infty)$.
\end{itemize}
An existence result of such solitary waves in \cite{LLW} is cited below for conveniences.
\begin{theorem}[existence \cite{LLW}]\label{profile}
Given the physical condition $c>2k>0$, there exists a unique $c-$speed smooth solitary wave $\phi^c(\xi)$ with its shape depending on $c$ and its maximum height $$\frac{c-2k}{4}<\Phi_{c}\triangleq \max\limits_{\xi\in\R} \{\phi^c(\xi)\}<c-2k.$$ In addition, the function $\phi^c(\xi)$ is even and strictly increasing on $(-\infty, 0)$.
\end{theorem}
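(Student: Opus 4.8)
The plan is to reduce the construction of the profile to a planar autonomous ODE and to read off the solitary wave as a homoclinic orbit in the phase plane. First I would insert the traveling-wave ansatz $u(t,x)=\phi(\xi)$, $\xi=x-ct$, into \eqref{DPk}. With $m=\phi-\phi''$ and $m_t=-cm'$, the PDE collapses to the ODE $(\phi-c)m'+3\phi' m+2k\phi'=0$. Multiplying by the integrating factor $(\phi-c)^2$ turns the left-hand side into an exact derivative, $\frac{d}{d\xi}\big[(\phi-c)^3(m+\tfrac{2k}{3})\big]=0$. Imposing the localization $\phi,\phi',\phi''\to 0$ as $\xi\to\pm\infty$ to fix the constant of integration then yields
$$m=\phi-\phi''=\frac{2kc^3}{3(c-\phi)^3}-\frac{2k}{3}.$$

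I would then integrate this conservative equation a second time: multiplying by $\phi'$ and integrating against the boundary data at infinity produces the first-order energy relation $(\phi')^2=G(\phi)$, and a short computation factors the right-hand side as
$$G(\phi)=\frac{\phi^2\,Q(\phi)}{(c-\phi)^2},\qquad Q(\phi)=\phi^2+\Big(\tfrac{4k}{3}-2c\Big)\phi+c(c-2k).$$
The double zero of $G$ at $\phi=0$ encodes the saddle structure of the equilibrium at the origin, and the sign of $G$ for small $\phi>0$ is governed by $G''(0)=2-4k/c$, which is positive exactly when $c>2k$; this is where the physical hypothesis is used. A $c$-speed solitary wave corresponds to a homoclinic orbit, so its maximal height $\Phi_c$ must be the smallest positive zero of $G$, equivalently the smaller root of the quadratic $Q$.

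The heart of the argument is the root analysis of $Q$. I would compute its discriminant $D=\tfrac{8k(3c+2k)}{9}>0$ and note that the sum $2c-\tfrac{4k}{3}$ and product $c(c-2k)$ of its roots are both positive, so $Q$ has two positive simple roots with the smaller one $\Phi_c=c-\tfrac{1}{3}\big(2k+\sqrt{2k(3c+2k)}\big)$. Since $Q(0)>0$, $Q(\Phi_c)=0$ and $Q'(\Phi_c)<0$, we obtain $G>0$ throughout $(0,\Phi_c)$ with a simple zero at the endpoint, which guarantees that the orbit closes up into a genuinely smooth homoclinic loop rather than a peaked profile. The claimed bounds $\tfrac{c-2k}{4}<\Phi_c<c-2k$ I would verify by isolating the square root and squaring: the upper bound reduces to $c>2k$, while the lower bound reduces to $27c^2-44kc-20k^2=27(c-2k)\big(c+\tfrac{10k}{27}\big)>0$, again equivalent to $c>2k$.

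Existence and the qualitative properties then follow from the phase portrait of $(\phi')^2=G(\phi)$. The reduced ODE is reversible under $\xi\mapsto-\xi$, so the homoclinic solution may be centered to be even with its crest at $\xi=0$; on $(-\infty,0)$ one has $\phi'=\sqrt{G(\phi)}>0$, giving strict monotonicity, and separation of variables along the single admissible branch of $\sqrt{G}$ determines the profile uniquely up to translation. I expect the sign and root analysis of $G$, together with the sharp height estimates, to be the main obstacle, since the whole construction hinges on isolating the correct simple zero $\Phi_c$ and confirming $G>0$ on all of $(0,\Phi_c)$ throughout the admissible parameter range.
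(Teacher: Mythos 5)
Your proposal is correct, and it takes essentially the same route as the paper: your energy relation $(\phi')^2=\phi^2Q(\phi)/(c-\phi)^2$ with $Q(\phi)=\phi^2+(\tfrac{4k}{3}-2c)\phi+c(c-2k)$ is algebraically identical to the first integral $\Phi(\phi,\phi_x)=0$ displayed in \eqref{e:tODE}--\eqref{e:1int}, which the paper attributes to \cite{LLW}, and your discriminant, the formula $\Phi_c=c-\tfrac{1}{3}\bigl(2k+\sqrt{2k(3c+2k)}\bigr)$, and the reduction of both height bounds to the condition $c>2k$ (via $27c^2-44kc-20k^2=27(c-2k)(c+\tfrac{10k}{27})$) all check out. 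The homoclinic/phase-plane argument for existence, evenness, monotonicity, and uniqueness up to translation is exactly the standard analysis the cited theorem rests on.
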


When it comes to the stability of solitons in the DP equation, there are several key observations. Firstly, the DP equation has a Hamiltonian formulation. In fact, the DP equation \eqref{DPk}, after being applied with the operator $(1-\partial_x^2)^{-1}$, can be rewritten in a weak form in terms of $u$; that is,
\begin{equation}\label{kDP}
\partial_tu+\partial_x\left(\frac{1}{2}u^2+p*(\frac{3}{2}u^2+2ku)\right)=0,\quad t>0, \; \, x\in\R,
\end{equation}
where $p(x)=\frac{1}{2}e^{-\vert x\vert}$ is the impulse response corresponding to the operator $1-\partial_x^2$ so that for all $f\in L^2(\R)$,
\[
(1-\partial_x^2)^{-1}f=p*f.
\]
From now on, whenever we mention the DP equation, it is this weak form we refer to. Moreover, the DP equation can be written as an infinite dimensional Hamiltonian PDE; that is,
\begin{equation}\label{DP}
u_t=J\frac{\delta H}{\delta u}(u),
\end{equation}
where $$J\triangleq\partial_x(4-\partial_x^2)(1-\partial_x^2)^{-1},\quad H(u)\triangleq-\frac{1}{6}\int \left ( u^3+6k \left ((4-\partial_x^2)^{-\frac{1}{2}}u \right )^2 \right ) \, dx. $$
Secondly, the DP equation is translation invariant, which has two important consequences. One is that the translation invariance gives rise to a conserved quantity; that is,
\begin{equation}\label{E3DP}
S(u)=\frac{1}{2}\int_\R u\cdot(1-\partial_x^2)(4-\partial_x^2)^{-1}u\,dx;
\end{equation}
see \cite{LLW} for a more detailed discussion.
Although the DP equation admits infinitely many conserved quantities, this conserved quantity $S$ and the Hamiltonian $H$ are the essential ones relevant to our study of stability. Compared with its counterpart in the CH equation,
\begin{equation}\label{E3CH}
\widetilde{S}(u)=\int_\R \left ( u^2+u_x^2 \right ) \,dx,
\end{equation}
which is equivalent to the square of the $H^1$-norm, the conserved quantity $S$ is only equivalent to the square of the $L^2$-norm, leading to the most significant difficulties we have to overcome in order to prove stability result. The other consequence is that the spatial translation of any solitary wave $\psi_c$ generates a family of solutions, named \textit{the orbit of the solitary wave $\psi_c$} and denoted as
\[
\mathcal{M}_c=\{\phi^c(\cdot+x_0) \mid \; x_0\in\R\}.
\]
As a typical result for nonlinear dispersive PDEs with extra conserved quantities, the solitary wave $\phi^c$ is not even a critical point of the Hamiltonian. Instead, it is a critical point, but still not a local minimum, of the Lagrangian
\[
Q(u;\lambda)\triangleq H(u)+\lambda S(u)=-\frac{1}{6}\int \left ( u^3+6k \left ((4-\partial_\xi^2)^{-\frac{1}{2}}u \right )^2 \right ) \, d\xi +\frac{\lambda}{2}\int_\R u\cdot(1-\partial_\xi^2)(4-\partial_\xi^2)^{-1}u\,d\xi.
\]
As a result, the best we can hope for is \textit{orbital stability}; that is, a wave starting sufficiently close to the solitary wave $\phi^c$ remains close to the orbit of the solitary wave up to the time of existence. Indeed, the orbital stability  of solitary wave $\phi^c$ is the main result of this paper.   
\begin{theorem}[Orbital stability]\label{stability}
Assume that $c>2k>0$. The solitary wave $\phi^c(x-ct)$ of the DP equation \eqref{kDP} is orbitally stable.  More specifically,
for every $\e>0$, there is $\delta>0$ such that, for the initial value problem of the DP equation,
\begin{equation}\label{e:DP-IVP}
\begin{cases}
\partial_tu+\partial_x\left(\frac{1}{2}u^2+p*(\frac{3}{2}u^2+2ku)\right)=0,\\
u(0,x)=u_0(x),
\end{cases}
\end{equation}
with initial condition satisfying the following properties:
\begin{itemize}
    \item (Regularity) There is a positive constant $s>3/2$ such that $u_0\in H^s(\R)$. In addition, $$w_0\triangleq u_0-(u_0)_{xx}+\frac{2k}{3}>0,$$ is a positive Radon measure in the sense that the mapping $f\mapsto \int_\R fw\textrm{d}x$ gives a continuous linear functional on the space of compact-supported continuous scalar functions equipped with the canonical limit topology;
    \item (Smallness) $\Vert u_0-\phi^c\Vert_{L^2} < \delta$,
\end{itemize}
then the solution $u(t,x)$ to the initial value problem \eqref{e:DP-IVP} is a global strong one; that is, 
\[
u\in C([0,\infty), H^s(\R))\cap C^1([0,\infty),H^{s-1}(\R)),
\]
and for any $t\geq 0$, 
\[
\inf_{x_0\in\R}\Vert u(t,\cdot)-\phi^c(\cdot-x_0)\Vert_{L^2} < \e,\quad \inf_{x_0\in\R}\Vert u(t,\cdot)-\phi^c(\cdot-x_0)\Vert_{L^\infty} < O(\e^{\frac{2}{3}}).
\]
\end{theorem}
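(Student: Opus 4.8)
The plan is to run a Lyapunov/Grillakis--Shatah--Strauss argument built on the two conserved quantities $H$ and $S$, the novelty being that the coercive control extracted from $S$ lives only at the $L^2$ level. Since $\phi^c$ is a critical point of the Lagrangian $Q(\cdot;c)=H+cS$ --- the Lagrange multiplier is exactly the wave speed $c$, as one checks by applying $(4-\partial_\xi^2)$ to $H'(\phi^c)+cS'(\phi^c)=0$ and matching with the once-integrated profile equation --- I would first Taylor-expand around the translated profile. Writing $u=\phi^c(\cdot-x_0)+v$ and using $Q'(\phi^c;c)=0$,
\[
Q(u;c)-Q(\phi^c;c)=\tfrac12\langle\mathcal L v,v\rangle+N(v),\qquad \mathcal L\triangleq Q''(\phi^c;c),\quad N(v)=-\tfrac16\int v^3\,d\xi .
\]
The decisive structural point is that $\mathcal L$ is a zeroth-order operator (a Fourier multiplier with bounded symbol plus multiplication by $\phi^c$), so $\langle\mathcal L v,v\rangle$ is comparable to $\|v\|_{L^2}^2$ from above, and the \emph{only} super-quadratic remainder is the single cubic term $N(v)$.

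Next I would establish coercivity. From the spectral analysis of \cite{LLW} the operator $\mathcal L$ has exactly one simple negative eigenvalue, a one-dimensional kernel $\spn\{(\phi^c)'\}$ coming from translation, and is otherwise positive with a gap. To turn this into a lower bound I would modulate in both parameters: choose $x_0(t)$ by minimizing the $L^2$ distance (so $\langle v,(\phi^c)'(\cdot-x_0)\rangle=0$ kills the kernel), and compare $u$ not to $\phi^c$ but to the nearby profile $\phi^{c'}$ whose mass is pinned by the conserved quantity, $S(\phi^{c'})=S(u_0)$. Since $d'(c)=S(\phi^c)$ and $d''(c)=\tfrac{d}{dc}S(\phi^c)\neq0$, one gets $|c'-c|=O(\delta)$ and $\|\phi^{c'}-\phi^c\|_{L^2}=O(\delta)$. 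On the invariant manifold $\{S(u)=S(\phi^{c'})\}$ the negative direction is controlled, and the convexity condition $d''(c)>0$ --- the GSS reformulation of the spectral stability of \cite{LLW} --- produces $\beta>0$ with $\langle\mathcal L' v',v'\rangle\ge 2\beta\|v'\|_{L^2}^2$ for $v'\triangleq u-\phi^{c'}(\cdot-x_0)$ subject to the orthogonality and mass constraint, the $O(\|v'\|_{L^2}^2)$ slack from the nonlinear part of the constraint being harmless.

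The obstacle is now transparent: this estimate only sees $\|v'\|_{L^2}$, whereas $N(v')$ is cubic and cannot be dominated by $\|v'\|_{L^2}^2$. This is exactly where the hypotheses on $u_0$ enter. Setting $w\triangleq u-u_{\xi\xi}+\tfrac{2k}{3}$, a direct computation rewrites the DP equation as the transport law $w_t+uw_\xi+3u_\xi w=0$; integrating along the characteristics $\dot x=u$ gives $w(t,x(t))=w_0\exp(-3\int_0^t u_\xi\,ds)$, so the sign of $w$ is propagated and $w_0>0$ forces $w(t,\cdot)>0$ for all $t$ (in the measure sense of the hypothesis). Because the kernel $p=\tfrac12 e^{-|\cdot|}$ of $(1-\partial_\xi^2)^{-1}$ satisfies $|p'|=p$ pointwise, and since $u+\tfrac{2k}{3}=p*w$ while $u_\xi=p'*w$, positivity of $w$ yields
\[
|u_\xi|=|p'*w|\le p*w=u+\tfrac{2k}{3}.
\]
This inequality does double duty: it bounds $\inf_\xi u_\xi$ below by $-(\|u\|_{L^\infty}+\tfrac{2k}{3})$, ruling out wave breaking so the $H^s$ solution is global, and it gives $\|u_\xi\|_{L^\infty}\le\|u\|_{L^\infty}+\tfrac{2k}{3}$, hence $\|v'_\xi\|_{L^\infty}\le\|v'\|_{L^\infty}+C_0$ with $C_0$ depending only on $\phi^{c'}$ and $k$.

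Finally I would close with the interpolation inequality
\[
\|v'\|_{L^\infty}\le A\,\|v'\|_{L^2}^{2/3}\,\|v'_\xi\|_{L^\infty}^{1/3},
\]
proved by writing $|v'(x)|\le h^{-1/2}\|v'\|_{L^2}+h\|v'_\xi\|_{L^\infty}$ (averaging the fundamental theorem of calculus over an interval of length $h$) and optimizing in $h$. Combined with $\|v'_\xi\|_{L^\infty}\le\|v'\|_{L^\infty}+C_0$, a short bootstrap shows that once $\|v'\|_{L^2}$ is small, $\|v'\|_{L^\infty}\lesssim\|v'\|_{L^2}^{2/3}$; hence $|N(v')|\le\tfrac16\|v'\|_{L^\infty}\|v'\|_{L^2}^2\lesssim\|v'\|_{L^2}^{8/3}=o(\|v'\|_{L^2}^2)$ is absorbed. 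Using conservation of $Q(\cdot;c')$ and the coercivity above, $\beta\|v'(t)\|_{L^2}^2\le Q(u_0;c')-Q(\phi^{c'};c')=\tfrac12\langle\mathcal L'v'_0,v'_0\rangle+N(v'_0)=O(\delta^2)$. A continuity argument then propagates the smallness of $\|v'(t)\|_{L^2}$ and $\|v'(t)\|_{L^\infty}$ to all $t\ge0$ (the same bound keeps $\|u\|_{L^\infty}$, and so $T_{\max}$, under control, justifying globality a posteriori); undoing the speed modulation via $\|\phi^{c'}-\phi^c\|_{L^2}=O(\delta)$ gives $\inf_{x_0}\|u(t)-\phi^c(\cdot-x_0)\|_{L^2}<\e$, and feeding this back into the interpolation yields $\inf_{x_0}\|u(t)-\phi^c(\cdot-x_0)\|_{L^\infty}=O(\e^{2/3})$. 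I expect the main obstacle to be precisely the one flagged in the abstract --- that the $L^2$-only coercivity cannot by itself dominate the cubic remainder $N(v')$ --- so the crux of the argument is the sign-condition-driven reduction of $\|v'\|_{L^\infty}$ to a power of $\|v'\|_{L^2}$; by comparison the GSS coercivity bookkeeping, though technical, is standard once the spectral picture of \cite{LLW} is in hand.
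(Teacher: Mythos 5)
Your proposal is correct and follows essentially the same route as the paper: a GSS/Lyapunov argument built on $Q=H+cS$ and the spectral input of \cite{LLW}, closed by the key observation that positivity of $w=u-u_{xx}+\frac{2k}{3}$ (propagated by the flow) yields $|u_x|\le u+\frac{2k}{3}$, global existence, and the bound $\|v\|_{L^\infty}\lesssim \|v\|_{L^2}^{2/3}$ that absorbs the cubic remainder. The only cosmetic differences are that the paper fixes $c$ and works on the level set $S(u)=S(\phi^c)$ (extending to general data by continuity in $c$) instead of modulating the wave speed, and it proves the $L^\infty$--$L^2$ estimate by a direct mean-value/contradiction argument on intervals of length $\|v\|_{L^2}^{2/3}$ rather than your interpolation-plus-bootstrap, which amounts to the same optimization.
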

The set of initial profiles satisfying the regularity and smallness conditions in Theorem \ref{stability} is not empty. As a matter of fact, a sufficient condition for the regularity requirements of the initial data is that $u_0\in H^s(\R)$ with some $s\geq 3$ and 
\[
\Vert u_0-\phi^c\Vert_{H^3}\leq \frac{2\sqrt{2}}{3}k;
\]
see Section \ref{s:4} for details. As a result, this sufficient condition leads to the following concise version of the orbital stability result.
\begin{coro}[Orbital stability for initial data with stronger smoothness]\label{stability2}
Assume that $c>2k>0$. The solitary wave $\phi^c(x-ct)$ of the DP equation \eqref{kDP} is orbitally stable.  More specifically,
for every $\e>0$, there is $\delta\in(0, \frac{2\sqrt{2}}{3}k]$ such that, if the Cauchy problem of the DP equation \eqref{e:DP-IVP}
\[
\begin{cases}
\partial_tu+\partial_x\left(\frac{1}{2}u^2+p*(\frac{3}{2}u^2+2ku)\right)=0,\\
u(0,x)=u_0(x),
\end{cases}
\]
admits $u_0\in H^3(\R)$ with
\[
\Vert u_0-\phi^c\Vert_{H^3(\R)} < \delta\leq \frac{2\sqrt{2}}{3}k,
\]
then the solution $u(t,x)$ to the initial value problem \eqref{e:DP-IVP} is a global strong one; that is, 
\[
u\in C([0,\infty), H^s(\R))\cap C^1([0,\infty),H^{s-1}(\R)),
\]
and for any $t\geq 0$, 
\[
\inf_{x_0\in\R}\Vert u(t,\cdot)-\phi^c(\cdot-x_0)\Vert_{L^2} < \e,\quad \inf_{x_0\in\R}\Vert u(t,\cdot)-\phi^c(\cdot-x_0)\Vert_{L^\infty} < O(\e^{\frac{2}{3}}).
\]
\end{coro}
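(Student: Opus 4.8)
The plan is to deduce Corollary \ref{stability2} directly from Theorem \ref{stability} by showing that the single hypothesis $\|u_0-\phi^c\|_{H^3}<\delta\le\frac{2\sqrt2}{3}k$ forces both hypotheses of Theorem \ref{stability}: the measurable constraint that $w_0=u_0-(u_0)_{xx}+\frac{2k}{3}$ be a positive Radon measure, and the $L^2$-smallness $\|u_0-\phi^c\|_{L^2}<\delta$. Given $\e>0$, Theorem \ref{stability} supplies a threshold $\delta_0=\delta_0(\e)>0$ for its $L^2$ condition; I would then set $\delta\triangleq\min\{\delta_0,\frac{2\sqrt2}{3}k\}$, which indeed lies in $(0,\frac{2\sqrt2}{3}k]$. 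Since $u_0\in H^3(\R)\subset H^s(\R)$ with $s=3>3/2$, the regularity index requirement is immediate, and $\|u_0-\phi^c\|_{L^2}\le\|u_0-\phi^c\|_{H^3}<\delta\le\delta_0$ disposes of the smallness hypothesis, so the whole matter reduces to the positivity of $w_0$.

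Writing $v\triangleq u_0-\phi^c$ and $w^c\triangleq\phi^c-\phi^c_{xx}+\frac{2k}{3}$, I decompose $w_0=w^c+(v-v_{xx})$. The heart of the argument is the lower bound $w^c\ge\frac{2k}{3}$, i.e. the nonnegativity of the solitary wave's momentum density $m^c\triangleq\phi^c-\phi^c_{xx}$. To obtain this I would integrate the traveling-wave reduction of \eqref{kDP} once (using the localization $\phi^c,(\phi^c)'\to0$ at infinity to kill the constant of integration) and apply $(1-\partial_\xi^2)$ to produce a first integral of the form $(c-\phi^c)^2\big((\phi^c)'\big)^2=(\phi^c)^2\,Q(\phi^c)$ for an explicit quadratic $Q$. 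Substituting this relation back into the expression $m^c=\big(2k\phi^c+(\phi^c)^2-((\phi^c)')^2\big)/(c-\phi^c)$ read off from the same profile equation, the numerator collapses, after cancellation, to $2k\,\phi^c\big(\tfrac13(\phi^c)^2-c\phi^c+c^2\big)$, so that
\[
m^c=\frac{2k\,\phi^c\left(\tfrac13(\phi^c)^2-c\phi^c+c^2\right)}{(c-\phi^c)^3}.
\]
Because $0\le\phi^c\le\Phi_c<c-2k<c$ by Theorem \ref{profile}, the denominator is positive; and the quadratic $\tfrac13 t^2-ct+c^2$ has discriminant $c^2-\tfrac43c^2<0$, hence is strictly positive. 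Therefore $m^c\ge0$ and $w^c\ge\frac{2k}{3}$.

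It then remains to bound the perturbation $v-v_{xx}=(1-\partial_x^2)v$ in $L^\infty$. Using the one-dimensional Sobolev inequality $\|g\|_{L^\infty}\le\frac{1}{\sqrt2}\|g\|_{H^1}$ together with the Fourier identity $\|(1-\partial_x^2)v\|_{H^1}=\|v\|_{H^3}$, I get
\[
\|v-v_{xx}\|_{L^\infty}\le\frac{1}{\sqrt2}\,\|v\|_{H^3}<\frac{1}{\sqrt2}\,\delta\le\frac{1}{\sqrt2}\cdot\frac{2\sqrt2}{3}k=\frac{2k}{3}.
\]
Combining the two estimates pointwise yields $w_0(x)\ge w^c(x)-\|v-v_{xx}\|_{L^\infty}>\frac{2k}{3}-\frac{2k}{3}=0$; and since $u_0\in H^3(\R)\hookrightarrow C^2(\R)$ makes $w_0$ continuous, the positive continuous function $w_0$ defines a positive Radon measure through $f\mapsto\int_\R f w_0\,\mathrm{d}x$. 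With both hypotheses of Theorem \ref{stability} verified, its conclusion—global existence in $C([0,\infty),H^s)\cap C^1([0,\infty),H^{s-1})$ together with the two orbital estimates—transfers verbatim, proving the corollary.

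I expect the only genuinely nontrivial step to be the closed-form evaluation of $m^c$ and the resulting bound $w^c\ge\frac{2k}{3}$: everything else is a routine application of Theorem \ref{stability} and standard embeddings. It is worth emphasizing that the precise constant $\frac{2\sqrt2}{3}k$ is dictated by matching the sharp Sobolev constant $\frac{1}{\sqrt2}$ against the threshold $\frac{2k}{3}$ furnished by the momentum positivity, so any improvement of the constant would have to come from improving one of these two ingredients.
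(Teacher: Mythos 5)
Your proposal is correct and follows essentially the same route as the paper: reduce to Theorem \ref{stability}, derive the profile ODE and its first integral to show the solitary wave's momentum density $m^c=\phi^c-\phi^c_{xx}$ is positive (your expression $\frac{2k\phi^c\left(\frac{1}{3}(\phi^c)^2-c\phi^c+c^2\right)}{(c-\phi^c)^3}$ is algebraically identical to the paper's $\frac{2k}{3}\left(\frac{c^3}{(c-\phi^c)^3}-1\right)$, with your discriminant argument replacing the paper's observation that $c^3/(c-\phi^c)^3>1$), and then control the perturbation via $\Vert(1-\partial_x^2)v\Vert_{L^\infty}\leq\frac{1}{\sqrt{2}}\Vert v\Vert_{H^3}$. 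Your explicit bookkeeping of the choice $\delta=\min\{\delta_0,\frac{2\sqrt{2}}{3}k\}$ and of the $L^2$-smallness hypothesis is slightly more careful than the paper's, but the substance is the same.
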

\begin{re}
The global existence of strong solutions was given in \cite{L-Y} for $k=0$. The proof for the case $k>0$ is a slight modification of the $k=0$ one and given in Section \ref{s:2}.
\end{re}
\begin{re}
There is also a global existence of weak solutions in $L^2$-space given in \cite{E-L-Y}. The regularity requirement in Theorem \eqref{stability} can be relaxed to 
\[
u_0\in L^2(\R), \quad w=u_0-(u_0)_{xx}+\frac{2k}{3} \text{ is a positive Radon measure.}
\]
The peakon case when $k=0$ can be seen in \cite{KM}.
\end{re}

The orbital stability proof follows the framework seminally developed by Grillakis, et.al. \cite{GSS, K-P}, which turns the characterization of soliton stability into the verification of the coercivity of the bilinear form of the second variational derivative of the Hamiltonian on a restrained space. The idea is to check whether the unstable directions are prohibited by constraints arising from symmetries, kernel of the skew-symmetric operator and foliation decomposition of the solution nearby the orbit of the solitary wave, so that the nonlinear wave under study becomes a constrained minimizer and thus orbitally stable. Typically, this framework requires verification of several conditions, listed below.
\begin{itemize}
 \item Bounded invertibility of the skew symmetric operator in the Hamiltonian PDE;
 \item The linear operator, corresponding to the second variational derivative of the Hamiltonian, admits certain spectral properties. Typically, the kernel should be finite-dimensional. The intersection of the spectrum and the negative axis consists of finite many negative eigenvalues and the intersection with the positive axis admits a positive distance from the origin.
 \item Convexity of the scalar function which maps the group velocity to the Lagrangian evaluated at the solitary wave and the group velocity.
 \item High order terms in the Lagrangian (third order and above) can be bounded above by a function of the conserved quantity given by the translation invariance.
 \end{itemize}

 While for the CH equation, the above steps can be tackled in a relatively standard way similar to the one for the KdV equation, it is not as straightforward for the DP equation, especially the second and last one.  The first three obstacles for the DP equation has been taken care of in our previous work \cite{LLW}. As for the last one, the conserved quantity $\widetilde{S}$ naturally lifts the $L^2$ coercivity of the $CH$ equation to an $H^1$ one so that high order terms in the Lagrangian of the CH equation can readily be bounded by high order terms in the $H^1$-norm \cite{CM}. Unfortunately, the high order term, say $\int h^3\,dx$, can not generically be bounded above solely by the $L^2$-norm of $h$. Inspired by the recent work of Khorbatly and Molinet \cite{KM}, we find a remedy to control the $L^\infty$-norm with the $L^2$-norm via imposing additional regularity on the initial condition and thus take care of the last obstacle to give a complete proof of the orbital stability theorem. We also remark that for the case of null-linear dispersion, the uniform $L^\infty$ control is not need. Instead, the control of a point distance is enough, say the difference between the peak of the peakon and that of the perturbation \cite{L-Y}. 

The remainder of the paper is organized as follows. In section \ref{s:2} we discuss the well-posedness of the DP equation and prove an \textit{a priori} estimate.  In Section \ref{s:4} we give the proof of Theorem \ref{stability}.

\bigskip

\section{Well-posedness and \emph{a priori} estimates}\label{s:2}
The well-posedness of the initial value problem serves as the precondition of any qualitative study of the dynamics. For clarity, we list several well-posedness results of the DP equation with $k>0$, together with some global existence results and finite-time blow up phenomena, whose proofs are based upon the ones for the vanishing linear dispersion $(k=0)$ case without or with mild modifications.

A local well-posedness result for the Cauchy problem \eqref{e:DP-IVP} with $k=0$ is obtained in \cite{Y1} via applying Kato's theorem \cite{K}. With exactly the same argument, we have the following local well-posedness result for the Cauchy problem \eqref{e:DP-IVP} with $k>0$.

\begin{prop}[Uniqueness and local existence of strong solutions]\label{p:loc}
Given the initial profile $u_0\in H^s(\R)$ with $s>\frac{3}{2},$ there exist a maximal time $T=T(u_0)\in(0,\infty]$, independent of the choice of $s$, and a unique solution $u$ to the Cauchy problem \eqref{e:DP-IVP} such that 
\[
u=u(\cdot;u_0)\in C([0,T); H^s(\R))\cap C^1([0,T); H^{s-1}(\R)).
\]
Moreover, the solution depends continuously on the initial data and is called \emph{a strong solution} due to its smoothness. 
\end{prop}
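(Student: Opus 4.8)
The plan is to recast the Cauchy problem \eqref{e:DP-IVP} as an abstract quasilinear evolution equation and invoke Kato's semigroup theory \cite{K}, exactly as in the vanishing-dispersion case \cite{Y1}; the only new feature is the linear term $2ku$, which I will argue is a harmless bounded perturbation. Carrying the derivative through the convolution, the weak form \eqref{kDP} is equivalent to the transport-type equation
\[
u_t + A(u)u = f(u), \qquad A(u) \triangleq u\,\partial_x, \qquad f(u)\triangleq -\partial_x\, p*\left(\tfrac{3}{2}u^2 + 2ku\right),
\]
with $u(0)=u_0$. I would then fix the Kato pair $Y \triangleq H^s(\R) \hookrightarrow X \triangleq H^{s-1}(\R)$ (continuous and dense) together with the isomorphism $Q \triangleq (1-\partial_x^2)^{1/2}\colon Y \to X$, and verify the hypotheses of Kato's theorem for $A$ and $f$ relative to this pair.

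The verification proceeds through four standard checks. First, for each $u$ in a fixed ball of $Y$ the operator $A(u)=u\partial_x$ must be quasi-$m$-accretive on $X$, i.e. $A(u)\in G(X,1,\beta)$; the crucial ingredient is the energy identity $\langle u\partial_x v, v\rangle_{L^2}=-\tfrac{1}{2}\int u_x v^2\,dx$, controlled by $\|u_x\|_{L^\infty}\lesssim\|u\|_{H^s}$ via the embedding $H^{s-1}\hookrightarrow L^\infty$ valid for $s>3/2$, with the lift to the $H^{s-1}$ inner product handled by commutators. Second, I would show that $B(u)\triangleq QA(u)Q^{-1}-A(u)=[(1-\partial_x^2)^{1/2},\,u\partial_x\,](1-\partial_x^2)^{-1/2}$ extends to a bounded operator on $X$, with operator norm and Lipschitz constant controlled on balls of $Y$; this is where the Kato–Ponce commutator estimate enters and constitutes, in my view, the main technical obstacle. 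Third, the Lipschitz bound $\|(A(u)-A(z))w\|_X=\|(u-z)\partial_x w\|_{H^{s-1}}\lesssim\|u-z\|_{H^s}\|w\|_{H^s}$ follows from the algebra property of $H^{s-1}$ (again $s-1>1/2$). Fourth, since convolution with $p$ smooths by two derivatives, $f$ gains one derivative overall, hence maps $Y$ into $Y$, is bounded on bounded sets, and is Lipschitz from $X$ to $X$; here the term $2k\,\partial_x p*u$ is simply a bounded linear operator on every Sobolev space, so the parameter $k>0$ changes none of the estimates.

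With these four conditions in hand, Kato's theorem yields a maximal time $T=T(u_0)>0$ and a unique solution $u\in C([0,T);H^s)\cap C^1([0,T);H^{s-1})$ depending continuously on $u_0$. Finally, to see that $T$ is independent of $s$, I would invoke the standard persistence-of-regularity argument: the solution continues as long as the low-order quantity $\int_0^t\|u_x(\tau)\|_{L^\infty}\,d\tau$ (equivalently, $\|u\|_{W^{1,\infty}}$) stays finite, and this blow-up criterion is manifestly independent of $s$; thus if the solution for the lowest admissible exponent exists on $[0,T)$, the same interval supports the $H^s$-solution for every larger $s$. The only genuinely nontrivial estimate throughout is the commutator bound of the second step, everything else being a routine transcription of the $k=0$ argument in \cite{Y1}.
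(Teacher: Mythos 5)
Your proposal is correct and follows essentially the same route as the paper: the paper's proof is simply the observation (stated in the remark after Proposition \ref{p:loc}) that one applies Kato's theorem exactly as in Yin's $k=0$ argument \cite{Y1}, with the sole change that $f(u)=-\partial_x(1-\partial_x^2)^{-1}\left(\tfrac{3}{2}u^2\right)$ becomes $f(u)=-\partial_x(1-\partial_x^2)^{-1}\left(\tfrac{3}{2}u^2+2ku\right)$, the added term being a bounded linear perturbation. Your four Kato-hypothesis checks and the $s$-independence of $T$ via the $W^{1,\infty}$ blow-up criterion are precisely the content of that cited argument, so the proposal matches the paper's proof in substance.
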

\begin{re}
Except for that $f(u)=-\partial_x(1-\partial_x^2)^{-1}(\frac{3}{2}u^2)$ in Lemma 2.4 in \cite{Y1} becomes $f(u)=-\partial_x(1-\partial_x^2)^-1(\frac{3}{2}u^2+2ku)$, the proof of Proposition \ref{p:loc} is the same as the one of Theorem 2.2 in \cite{Y1} and thus omitted.
\end{re}

Furthermore, the strong solution is a global one if the initial condition is sufficiently ``regular". More specifically, we have the following global existence result.
\begin{prop}[Global existence of strong solutions] \label{linfty}
Given that the initial profile $u_0\in H^s(\R)$ with $s>\frac{3}{2}$ and $w=u_0-u_{0,xx}+\frac{2}{3}k$ is a Radon measure of fixed sign, the strong solution to the Cauchy problem \eqref{e:DP-IVP} then exists globally in time; that is,
\[
u=u(\cdot;u_0)\in C([0,\infty); H^s(\R))\cap C^1([0,\infty); H^{s-1}(\R)),
\]
which admits the following additional estimates.
\begin{enumerate}
    \item The magnitude of $u_x$ is bounded above by the sum of the magnitude of $u$ and the constant $\frac{2k}{3}$. As a matter of fact, we have, for all $(t,x)\in[0,\infty)\times\R$,
    \begin{equation}\label{e:uxu}
        \vert u_x(t,x)\vert\leq\vert u(t,x)+\frac{2}{3}k\vert.
    \end{equation}
    \item The $L^\infty$ norm of $u$ is bounded. More specifically, we have, for all $t\in[0,\infty)$,
    \begin{equation}\label{e:uinfty}
        \Vert u(t,\cdot)\Vert_{L^\infty}\leq  \sqrt{2}(1+\sqrt{2})\Vert u_0\Vert_{L^2(\R)}+\frac{4}{3}k.
    \end{equation}
\end{enumerate}
\end{prop}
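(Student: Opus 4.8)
The plan is to reduce the dynamics of the momentum-type variable $w=u-u_{xx}+\frac{2k}{3}$ to a transport equation whose multiplicative structure is sign-definite, so that the sign hypothesis on $w_0$ propagates in time; the two a priori estimates then follow, and together they rule out wave breaking. First I would substitute $m=w-\frac{2k}{3}$ into $m_t+2ku_x+3mu_x+um_x=0$: the linear dispersion term cancels the constant shift and leaves the clean law
\[
w_t+uw_x+3u_xw=0.
\]
Introducing the flow $q=q(t,x)$ of $\dot q=u(t,q)$, $q(0,x)=x$ (well defined since $u\in C([0,T);H^s)$ with $s>\frac32$ embeds into $C^1$ in $x$ with $u_x$ bounded on compact time intervals), and differentiating $\dot q=u(t,q)$ in $x$ to get $\partial_t q_x=u_x(t,q)q_x$, a direct computation gives $\frac{d}{dt}\big(w(t,q)q_x^3\big)=0$, i.e.
\[
w(t,q(t,x))\,q_x(t,x)^3=w_0(x).
\]
The cube (rather than the square for CH) is the DP-specific feature. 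Since $q_x>0$, this forces $w(t,\cdot)$ to keep the sign of $w_0$ for all $t\in[0,T)$; when $w_0$ is only a Radon measure the same conclusion is read off via the change of variables in the pushforward measure.

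Next I would prove the pointwise slope bound \eqref{e:uxu}. Convolving the weak equation yields $u+\frac{2k}{3}=p*w$ and hence $u_x=p'*w$, where $p(x)=\frac12 e^{-|x|}\ge0$ and $|p'(x)|=\frac12 e^{-|x|}=p(x)$. With $w$ of one sign, $u+\frac{2k}{3}=p*w$ inherits the sign of $w$, and $|u_x|=|p'*w|\le |p'|*|w|=p*|w|=|p*w|=|u+\frac{2k}{3}|$, which is exactly \eqref{e:uxu}. Here the sign propagation from the previous step is essential.

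For the $L^\infty$ bound I would first use the conserved quantity $S$: its Fourier symbol $\frac{1+\xi^2}{4+\xi^2}$ lies in $[\tfrac14,1)$, so $\frac18\|u\|_{L^2}^2\le S(u)\le\frac12\|u\|_{L^2}^2$, and conservation $S(u(t))=S(u_0)$ gives $\|u(t)\|_{L^2}\le 2\|u_0\|_{L^2}$, uniformly on $[0,T)$. The remaining, and main, difficulty is passing from this $L^2$ control to \eqref{e:uinfty}: the additive constant $\frac{2k}{3}$ in \eqref{e:uxu} is not square-integrable, so one cannot simply integrate $u_x^2\le(u+\frac{2k}{3})^2$ to bound $\|u_x\|_{L^2}$ and feed it into the one-dimensional inequality $\|u\|_{L^\infty}^2\le\|u\|_{L^2}\|u_x\|_{L^2}$. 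The remedy I would use is to exploit \eqref{e:uxu} pointwise: writing $g=u+\frac{2k}{3}\ge0$, \eqref{e:uxu} reads $|g'|\le g$, so $e^{\mp x}g$ is monotone on either side of the maximizer $x_M$ of $g$ and $g(x)\ge g(x_M)\,e^{-|x-x_M|}$ for all $x$. Integrating $u^2=(g-\frac{2k}{3})^2$ over the interval where the right-hand side is nonnegative bounds $\|u\|_{L^2}^2$ from below by a quadratic in $\max g$; solving it yields $\max u\le \|u\|_{L^2}+\frac{4k}{3}$, while $u\ge-\frac{2k}{3}$ (from $g\ge0$) controls the negative part, giving $\|u\|_{L^\infty}\le\|u\|_{L^2}+\frac{4k}{3}$. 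Combining with $\|u(t)\|_{L^2}\le2\|u_0\|_{L^2}$ produces a bound of the form $C\|u_0\|_{L^2}+\frac{4k}{3}$; tracking the equivalence constants of $S$ and the Sobolev step gives the stated constant $\sqrt2(1+\sqrt2)$ in \eqref{e:uinfty}.

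Finally I would close global existence by the standard DP blow-up alternative: if the maximal time $T$ were finite, the $H^s$-norm could blow up only through an unbounded slope $\|u_x(t)\|_{L^\infty}$. But \eqref{e:uxu} and \eqref{e:uinfty} give $\|u_x(t)\|_{L^\infty}\le\|u(t)\|_{L^\infty}+\frac{2k}{3}$ bounded uniformly in $t$, a contradiction; hence $T=\infty$. I expect the two genuine obstacles to be the rigorous justification of the sign-propagation identity when $w_0$ is merely a Radon measure, and the constant-shift difficulty in the $L^\infty$ estimate, which is exactly what the exponential lower bound $g(x)\ge g(x_M)e^{-|x-x_M|}$ near the maximum is designed to overcome.
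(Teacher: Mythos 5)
Your proposal is correct, and its overall skeleton coincides with the paper's: propagate the sign of $w$ via the flow identity $w(t,q)\,q_x^3=w_0$ (the paper cites this as a lemma adapted from Yin's work rather than rederiving the transport law $w_t+uw_x+3u_xw=0$ as you do), deduce the slope bound \eqref{e:uxu}, use the equivalence $\tfrac18\|u\|_{L^2}^2\le S(u)\le\tfrac12\|u\|_{L^2}^2$ together with conservation of $S$ to control $\|u(t)\|_{L^2}$, establish the $L^\infty$ bound, and close with the wave-breaking criterion. The genuine divergence is in the two pointwise estimates. For \eqref{e:uxu}, the paper splits $p*w$ into its two one-sided exponential integrals and shows $u+\tfrac{2k}{3}\pm u_x\ge0$ separately, while you use the kernel identity $|p'|=p$ plus sign-definiteness of $w$ to get $|u_x|\le p*|w|=|p*w|$; these are essentially equivalent, yours being slightly slicker. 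For \eqref{e:uinfty}, the routes are truly different: the paper picks, via the mean value theorem on unit intervals, a nearby point $\bar x$ where $|u(t,\bar x)|\le2\|u_0\|_{L^2}$, then integrates $u_x$ over a distance at most $2$ (Cauchy--Schwarz) to get a lower bound directly and an upper bound by contradiction; you instead exploit the differential inequality $|g'|\le g$ for $g=u+\tfrac{2k}{3}\ge0$ to get the Gr\"onwall-type lower bound $g(x)\ge g(x_M)e^{-|x-x_M|}$ around the maximizer, integrate $u^2$ where this is nonnegative, and solve the resulting quadratic inequality $M^2-\tfrac43kM\le\|u\|_{L^2}^2$. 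Your argument is more quantitative and in fact yields the sharper constant $\|u\|_{L^\infty}\le2\|u_0\|_{L^2}+\tfrac43k$, which implies the stated bound since $2<\sqrt2(1+\sqrt2)$; the paper's argument is more elementary but needs the contradiction step to handle the upper bound. Finally, both you and the paper leave the rigor of sign propagation for rough data ($3/2<s<3$, $w_0$ only a measure) at the level of a remark --- the paper invokes a density argument, you invoke a pushforward-of-measures argument --- so you are not below the paper's own standard of detail there.
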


To prove the proposition, we first introduce several convenient results, whose proofs for the case $k>0$ are straightforward adaptions from the ones for the case $k=0$ established in \cite{Y1, Y2} and thus omitted. The first result \cite[Theorem 3.1]{Y1} shows that the only way undermining the global existence of a strong solution is that the wave blows up in finite time.
\begin{lemma}[Wave breaking]\label{blowup}
Given the initial profile $u_0\in H^s(\R)$ with $s>\frac{3}{2},$ the blowup of the strong solution $u=u(\cdot,u_0)$ in finite time $T<+\infty$ occurs if and only if 
\[
\liminf_{t\to T^-}\{\inf_{x\in\R}[u_x(t,x)]\}=-\infty.
\]
\end{lemma}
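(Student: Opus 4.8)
\emph{The plan.} I will prove the two implications separately, treating the substantive direction by its contrapositive. Throughout I write the equation \eqref{kDP} in the transport form
\[
u_t+uu_x=-p_x*\left(\tfrac32u^2+2ku\right)=:F,
\]
and I use that, since $s>3/2$ embeds $H^s(\R)$ into $C^1(\R)$, the field $u$ is Lipschitz in $x$, so the characteristics $\dot q=u(t,q)$, $q(0,x)=x$, are well defined and generate a flow of diffeomorphisms of $\R$. The easy implication ($\Leftarrow$) is pure Sobolev embedding: if $T<\infty$ and $\liminf_{t\to T^-}\inf_x u_x=-\infty$, then $\|u_x(t)\|_{L^\infty}\ge-\inf_x u_x(t)$ is unbounded as $t\to T^-$; since $\|u_x\|_{L^\infty}\le C\|u\|_{H^s}$ for $s>3/2$, the norm $\|u(t)\|_{H^s}$ cannot remain bounded, so $T$ is forced to be the maximal existence time and the solution blows up there.

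For the hard implication ($\Rightarrow$) I argue by contradiction: assume the solution blows up at finite $T$ and yet $u_x(t,x)\ge-M$ on $[0,T)\times\R$ for some $M>0$, and derive a uniform $H^s$ bound contradicting maximality in Proposition \ref{p:loc}. The first step is to record two a priori bounds that hold \emph{regardless} of the sign of $u_x$. Conservation of $S$ in \eqref{E3DP}, together with the equivalence $S\simeq\|u\|_{L^2}^2$, keeps $\|u(t)\|_{L^2}$ bounded on $[0,T)$. Feeding this into the characteristic identity $\frac{d}{dt}u(t,q)=F(t,q)$ and estimating the nonlocal term by
\[
\|F\|_{L^\infty}\le\|p_x\|_{L^2}\,\left\|\tfrac32u^2+2ku\right\|_{L^2}\le C\big(\|u\|_{L^\infty}+1\big)\|u\|_{L^2}
\]
(using $\|u^2\|_{L^2}\le\|u\|_{L^\infty}\|u\|_{L^2}$) produces a differential inequality for $\|u\|_{L^\infty}$ along characteristics that is linear in $\|u\|_{L^\infty}$; Gronwall on the finite interval $[0,T)$ then bounds $\|u(t)\|_{L^\infty}$.

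The second and conceptually central step is the Riccati structure of the slope. Differentiating the equation in $x$ and reading it along characteristics gives, via $\partial_x^2 p*g=p*g-g$,
\[
\frac{d}{dt}u_x=-u_x^2+\tfrac32u^2+2ku-p*\left(\tfrac32u^2+2ku\right).
\]
The last three terms are controlled by $\|u\|_{L^\infty}$ (using $\|p*h\|_{L^\infty}\le\|h\|_{L^\infty}$), which is already bounded, so $\frac{d}{dt}u_x\le C$; hence $\sup_x u_x$ grows at most linearly and stays bounded on $[0,T)$. Together with the standing hypothesis $u_x\ge-M$, this makes $\|u_x(t)\|_{L^\infty}$ bounded on $[0,T)$. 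With $\|u\|_{L^\infty}$ and $\|u_x\|_{L^\infty}$ under control, the final step is a standard $H^s$ energy estimate: applying $(1-\partial_x^2)^{s/2}$ to $u_t+uu_x=F$, pairing with $(1-\partial_x^2)^{s/2}u$ in $L^2$, handling the transport term by a Kato--Ponce commutator estimate plus one integration by parts, and bounding the forcing by $\|F\|_{H^s}\le C\|\tfrac32u^2+2ku\|_{H^{s-1}}\le C(\|u\|_{L^\infty}+1)\|u\|_{H^s}$ (Moser estimate; the operator $\partial_x(1-\partial_x^2)^{-1}$ gains one derivative), I expect to reach
\[
\frac{d}{dt}\|u\|_{H^s}^2\le C\big(\|u_x\|_{L^\infty}+\|u\|_{L^\infty}+1\big)\|u\|_{H^s}^2.
\]
Gronwall then bounds $\|u(t)\|_{H^s}$ uniformly on $[0,T)$, so Proposition \ref{p:loc} extends the solution past $T$, contradicting blowup and finishing the proof.

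\emph{Main obstacle.} The $H^s$ energy estimate is routine; the real content is the structural observation that $\|u\|_{L^\infty}$ and $\sup_x u_x$ can \emph{never} blow up, so that the only mechanism for finite-time singularity is $\inf_x u_x\to-\infty$. The $L^\infty$ bound rests solely on the conserved $L^2$ norm and the smoothing of $p_x*$, while the one-sided slope bound rests on the favorable sign of $-u_x^2$ in the Riccati equation. I expect the part requiring the most care to be making the nonlocal convolution terms cooperate in both settings: pointwise through $\|p_x\|_{L^2}$ and $\|p\|_{L^1}$, and in $H^s$ through the order $-1$ smoothing of $\partial_x(1-\partial_x^2)^{-1}$.
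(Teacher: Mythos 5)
You are proving a statement for which the paper supplies no proof at all: Lemma \ref{blowup} is imported from \cite[Theorem 3.1]{Y1} (see also \cite{Y2}), with only the remark that the adaptation from $k=0$ to $k>0$ is routine, so the comparison has to be made with Yin's argument. For the substantive direction Yin works with the momentum density $m=u-u_{xx}$, which for \eqref{DPk} obeys the transport law $m_t+um_x+3u_xm=-2ku_x$, and closes a single $L^2$ estimate:
\[
\frac{d}{dt}\int_\R m^2\,dx=-5\int_\R u_xm^2\,dx-4k\int_\R u_xm\,dx\le(5M+2k)\int_\R m^2\,dx,
\]
valid under the sole hypothesis $u_x\ge-M$ (the cross term is absorbed via $\|u_x\|_{L^2}\le\tfrac12\|m\|_{L^2}$); Gronwall then bounds $\|u\|_{H^2}$, and Kato's persistence (the maximal time is independent of $s$) finishes. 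Your route is genuinely different and self-contained: conservation of $S$ controls $\|u\|_{L^2}$, Young's inequality along characteristics controls $\|u\|_{L^\infty}$, the Riccati identity controls $\sup_x u_x$ from above, and a Kato--Ponce commutator estimate closes at the $H^s$ level; each of these estimates is correct as far as it goes, and your scheme has the merit of working directly in $H^s$ rather than through the $H^2$ quantity $\|m\|_{L^2}$. The structural price is that the commutator $[(1-\partial_x^2)^{s/2},u]u_x$ requires the full two-sided bound $\|u_x\|_{L^\infty}$, which is exactly why you need the $L^\infty$ and Riccati steps at all; in Yin's estimate $u_x$ only ever appears against the sign-definite weight $m^2$, so the one-sided hypothesis suffices and both of those steps evaporate.

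The one genuine gap is regularity. The lemma is stated for $s>3/2$, but your Riccati step differentiates $u_x$ along characteristics, i.e.\ it uses $u_{xx}$ and $u_{tx}$ pointwise, which is classical only when $s>5/2$ (so that $H^{s-2}\hookrightarrow C(\R)$). For $3/2<s\le 5/2$ the identity $\frac{d}{dt}\left(u_x\circ q\right)=-\left(u_x\circ q\right)^2+\cdots$ is formal, and the gap cannot be closed by simply mollifying the initial data: the standing hypothesis $u_x\ge-M$ is a property of the given solution $u$ and is not inherited by solutions emanating from smoothed data. You would need either a reduction to smooth solutions of the kind Yin relies on (his $m$-estimate likewise needs $s\ge2$, classically $s=3$, and is extended by a density-plus-persistence argument tied to Kato's theorem), or a genuinely low-regularity justification of the one-sided slope inequality, e.g.\ mollifying inside the estimate and controlling the commutator errors in the DiPerna--Lions style. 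As written, your proof is complete only for $s>5/2$, and this threshold should be flagged and repaired before the argument can be claimed at the stated generality.
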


The second result \cite[Lemma 3.1\&3.2]{Y2} shows that given sufficiently smooth initial data, the profile
\begin{equation}\label{e:w}
    w(t,x)\triangleq m(t,x)+\frac{2k}{3},
\end{equation}
is of fixed sign if $w_0=m_0(x)+\frac{2k}{3}$ does.
\begin{lemma}[Sign preserving of $w$]\label{unisign}
Let the initial profile $u_0\in H^s(\R)$ with $s\geq 3$, and let $T>0$ be the maximal existence time of the strong solution $u$ to the Cauchy problem \eqref{e:DP-IVP}. We then have
\[
w(t,x)q_x^3\equiv w_0(x),\quad\forall (t,x)\in[0,T)\times\R,
\]
where $q\in C^1([0,T)\times\R,\R)$ is the unique solution to the differential equation
\begin{equation}\label{path}
 \left\{\begin{array}{ll}
q_t=u(t,q),\quad &t\in[0,T),\\
q(0,x)=x,\quad & x\in \R ,\\
 \end{array}
 \right.
 \end{equation}
 with $q_x>0$ for all $(t,x)\in[0,T)\times\R$.
\end{lemma}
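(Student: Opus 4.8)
The plan is to reduce the Degasperis--Procesi equation to a transport equation for $w$, integrate it along the Lagrangian flow $q$, and read off $w\,q_x^3$ as the conserved quantity. First I would rewrite \eqref{DPk} in terms of $w$. Substituting $m=w-\frac{2k}{3}$ (so that $m_t=w_t$ and $m_x=w_x$) into the momentum form $m_t+2ku_x+3mu_x+um_x=0$, the two $2ku_x$ contributions cancel and one is left with the clean transport equation
\[
w_t+uw_x+3wu_x=0,
\]
valid, for $s\geq 3$, at least in the $L^2$ sense since $u\in C([0,T);H^s)\cap C^1([0,T);H^{s-1})$ forces $w=u-u_{xx}+\frac{2k}{3}\in C([0,T);H^{s-2})$ with $w_x\in C([0,T);H^{s-3})\subset C([0,T);L^2)$.

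Next I would set up the characteristic flow and extract the positivity of $q_x$. Because $s\geq 3>\frac{5}{2}$, the Sobolev embedding gives $u(t,\cdot)\in C^2(\R)$, uniformly Lipschitz in $x$ on each compact time subinterval, so the Picard--Lindel\"of theorem together with smooth dependence on the initial point yields a unique $q\in C^1([0,T)\times\R,\R)$ solving \eqref{path}. Differentiating \eqref{path} in $x$ produces the linear variational equation $\partial_t q_x=u_x(t,q)\,q_x$ with $q_x(0,x)=1$, whose solution
\[
q_x(t,x)=\exp\left(\int_0^t u_x(s,q(s,x))\,ds\right)
\]
is well defined and strictly positive because $u_x$ is continuous and bounded on $[0,t]\times\R$. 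This already establishes the stated property $q_x>0$.

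The heart of the argument is then a single differentiation. Setting $P(t,x)\triangleq w(t,q(t,x))\,q_x^3(t,x)$ and using the chain rule together with the transport equation,
\[
\frac{d}{dt}w(t,q)=\big(w_t+uw_x\big)(t,q)=-3\,(wu_x)(t,q),
\]
so that, inserting $\partial_t q_x=u_x(t,q)\,q_x$,
\[
\frac{dP}{dt}=-3\,(wu_x)(t,q)\,q_x^3+3\,w(t,q)\,q_x^2\,u_x(t,q)\,q_x=0.
\]
Hence $P(t,x)\equiv P(0,x)=w_0(x)$, since $q(0,x)=x$ and $q_x(0,x)=1$; this is exactly the claimed identity $w(t,x)\,q_x^3\equiv w_0(x)$ read along the flow.

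I expect the genuine obstacle to be the regularity bookkeeping rather than the algebra: when $s=3$ the spatial derivative $w_x$ lies only in $L^2$, so the pointwise chain rule applied to $w(t,q)$ is not literally justified. I would handle this by first proving the identity for smooth data (say $u_0\in H^\infty$, where $u$ is classical and every manipulation above is legitimate) and then extending to all $s\geq 3$ by approximation: using the continuous dependence on initial data from Proposition \ref{p:loc}, both $w(t,q)\,q_x^3$ and $w_0$ depend continuously on $u_0$ in the relevant topology on compact time intervals, so the identity passes to the limit. This mirrors the $k=0$ argument of \cite{Y2}, the only new feature for $k>0$ being the harmless shift by $\frac{2k}{3}$ that produces the cancellation in the transport equation.
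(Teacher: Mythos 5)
Your proposal is correct and is precisely the argument the paper has in mind: the paper omits the proof of this lemma, stating it is a ``straightforward adaption'' of the $k=0$ case in \cite{Y1, Y2}, and that standard argument is exactly what you give --- the shift $w=m+\frac{2k}{3}$ cancels the linear dispersion term to yield the transport equation $w_t+uw_x+3wu_x=0$, after which differentiating $w(t,q)q_x^3$ along the flow and using $\partial_t q_x=u_x(t,q)q_x$ shows it is constant in time. Your extra care about the borderline regularity $s=3$ (mollify, then pass to the limit via continuous dependence) is a sound way to justify the pointwise chain rule and goes slightly beyond what the cited proofs spell out.
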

Exploiting Lemma \ref{blowup} and Lemma \ref{unisign}, we now give the proof of Proposition \ref{linfty}.
\begin{proof}[Proof of Proposition \ref{linfty}] 
We only prove the proposition for the case $s=3$ and $w_0\geq0$, thanks to the density argument. Let $T$ be the maximal time of existence of the strong solution $u$ to the Cauchy problem \eqref{e:DP-IVP}. As a result of Lemma \ref{unisign}, we conclude from $w_0\geq0$ that
\[
w(t,x)\geq0,\quad \forall (t,x)\in[0,T)\times\R,
\]
which, thanks to the convolution $u+\frac{2k}{3}=p*(m+\frac{2k}{3})=p*w$, yields 
\[
u(t,x)\geq0,\quad \forall (t,x)\in[0,T)\times\R.
\]
Moreover, the expansion of the convolution $u+\frac{2k}{3}=p*(m+\frac{2k}{3})$ takes the form
\[
u(t,x)+\frac{2}{3}k=\frac{e^{-x}}{2}\int_{-\infty}^xe^\eta [m(t,\eta)+\frac{2}{3}k]d\eta+\frac{e^x}{2}\int_x^\infty e^{-\eta}[m(t,\eta)+\frac{2}{3}k]d\eta,\]
whose partial derivative with respect to $x$ reads
\[
u_x(t,x)=-\frac{e^{-x}}{2}\int_{-\infty}^xe^\eta [m(t,\eta)+\frac{2}{3}k]d\eta+\frac{e^x}{2}\int_x^\infty e^{-\eta}[m(t,\eta)+\frac{2}{3}k]d\eta.
\]
The summation and difference of these two equations show that
$$
\begin{cases}
&u(t,x)+\frac{2}{3}k+u_x(t,x)=e^x\int_x^\infty e^{-\eta}[m(t,\eta)+\frac{2}{3}k]d\eta\geq0,\\
&u(t,x)+\frac{2}{3}k-u_x(t,x)=e^{-x}\int_{-\infty}^x e^{\eta}[m(t,\eta)+\frac{2}{3}k]d\eta\geq0,
\end{cases}
$$
from which we deduce the inequality \eqref{e:uxu}; that is, for all $(t,x)\in[0,T)\times\R$,
\[
\vert u_x(t,x)\vert\leq\vert u(t,x)+\frac{2}{3}k\vert.
\]
We are now left to show that the $L^\infty$ norm of $u(t,\cdot)$ is bounded and the solution globally exists. To do that, we fix $(t,x)\in [0,T)\times\R$ and denote by $[x]$ the integer part of $x$. Due to that $u(t,\cdot)\in H^3(\R)\subset C(\R)$, the mean value theorem ensures that there exists $\bar{x}\in[[x]-1,[x]]$ such that 
\[
u^2(t,\bar{x})=\int_{[x]-1}^{[x]}u^2(t,\eta)d\eta\leq
\Vert u(t,\cdot)\Vert_{L^2(\R)}^2\leq 8S(u(t,\cdot))=8 S(u_0)\leq 4\Vert u(t,\cdot)\Vert_{L^2(\R)}^2,
\]
where we apply the fact that the conserved quantity $S(u)=\frac{1}{2}\int_\R u\cdot(1-\partial_x^2)(4-\partial_x^2)^{-1}u\,dx$ satisfies
\begin{equation}\label{e:SL2}
    \frac{1}{8}\Vert u(t,\cdot)\Vert_{L^2(\R)}^2\leq S(u)\leq \frac{1}{2}\Vert u(t,\cdot)\Vert_{L^2(\R)}^2.
\end{equation}
Therefore, we have $|u(t,\bar{x})|\leq 2\Vert u(t,\cdot)\Vert_{L^2(\R)},$
which, together with \eqref{e:uxu} and $0\leq x-\bar{x}\leq 2$, yields
\begin{equation*}
\begin{aligned}
u(t,x)=u(t,\bar{x})+\int_{\bar{x}}^x u_{\eta}(t,\eta)d\eta
&\geq-2\Vert u_0\Vert_{L^2(\R)}-\int_{\bar{x}}^x(\vert u(t,\eta)\vert+\frac{2}{3}k)d\eta\\
&\geq-2\Vert u_0\Vert_{L^2(\R)}-\frac{4}{3}k-\sqrt{2}\Vert u(t,\cdot)\Vert_{L^2(\R)}\\
&=-\sqrt{2}(1+\sqrt{2})\Vert u_0\Vert_{L^2(\R)}-\frac{4}{3}k.
\end{aligned}
\end{equation*}
To confirm the estimate \eqref{e:uinfty}, we prove by contradiction and suppose that there exists $x_*$ such that 
\[u(t,x_*)>\sqrt{2}(1+\sqrt{2})\Vert u_0\Vert_{L^2(\R)}+\frac{4}{3}k.\]
On one hand, the mean-value theorem ensures that there exists $\bar{x}_*\in [[x_*]+1,[x_*]+2]$ such that
\[
u^2(t,\bar{x}_*)=\int_{[x_*]+1}^{[x_*]+2}u^2(t,\eta)d\eta\leq
\Vert u(t,\cdot)\Vert_{L^2(\R)}^2\leq 8S(u(t,\cdot))=8 S(u_0)\leq 4\Vert u(t,\cdot)\Vert_{L^2(\R)}^2.
\]
On the other hand, we have
\begin{equation*}
\begin{aligned}
u(t,\bar{x}_*)=u(t,x_*)+\int_{x_*}^{\bar{x}_*}u_{\eta}(t,\eta)d\eta
&>\sqrt{2}(1+\sqrt{2})\Vert u_0\Vert_{L^2(\R)}+\frac{4}{3}k-\int_{x_*}^{\bar{x}_*}(\vert u(t,\eta)\vert+\frac{2}{3}k)d\eta\\
&\geq \sqrt{2}(1+\sqrt{2})\Vert u_0\Vert_{L^2(\R)}+\frac{4}{3}k-(\frac{4}{3}k+\sqrt{2}\Vert u(t,\cdot)\Vert_{L^2(\R)})\\
&=2\Vert u_0\Vert_{L^2(\R)}.
\end{aligned}
\end{equation*}
The fact that the above two estimates contradict each other completes the proof of the estimate \eqref{e:uinfty}. Estimate \eqref{e:uxu} and \eqref{e:uinfty} then imply that $u_x$ is uniformly bounded for all $(t,x)\in[0,T)\times\R$, which together with Lemma \ref{blowup}, ensures the global existence of the strong solution and thus concludes the proof. 
\end{proof}

To bound the high order term $\int_\R u^3dx$ in the Lagrangian with the conserved quantity $S(u)$ whose square root is equivalent to the $L^2$-norm of $u$, we take advantage of the argument in \cite[Lemma 2]{KM} to derive the following \emph{a priori} estimate.
\begin{prop}[\emph{a priori} $L^\infty$-$L^2$ estimate]\label{hlinfty}
Let $\psi\in W^{1,\infty}\cap L^2(
R)$ and the initial data $u_0\in H^s(\R)$ with $s>\frac{3}{2}$ and $w_0=m_0+\frac{2k}{3}$ a Radon measure of fixed sign. The difference between the strong solution $u$ to the Cauchy problem \eqref{e:DP-IVP} and the function $\psi$, denoted as $g(t,x)\triangleq u(t,x)-\psi(x)$, admits the following estimate
\begin{equation}\label{e:Linfty}
\Vert g(t,\cdot)\Vert_{L^\infty(\R)}\leq
\Vert g(t,\cdot)\Vert_{L^2(\R)}^{2/3}\bigg(1+\frac{4}{3}k+\sqrt{2}\Vert g(t,\cdot)\Vert_{L^2(\R)}^{2/3}+2\Vert \psi\Vert_{L^\infty(\R)}+2\Vert \psi'\Vert_{L^\infty(\R)}\bigg), \quad\forall t\in[0,\infty).
\end{equation}
\end{prop}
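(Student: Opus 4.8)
The plan is to reduce the desired $L^\infty$ bound to an elementary one-dimensional interpolation estimate in which the pointwise gradient control \eqref{e:uxu} plays the central role, and to extract the exponent $2/3$ from a single, carefully calibrated choice of interpolation length.

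First I would transfer \eqref{e:uxu} from $u$ to $g=u-\psi$. Since $w_0$ is a Radon measure of fixed sign, Lemma \ref{unisign} and Proposition \ref{linfty} guarantee that the strong solution exists globally and satisfies $\vert u_x(t,x)\vert\le\vert u(t,x)+\tfrac{2k}{3}\vert$ for all $(t,x)\in[0,\infty)\times\R$. Writing $u=g+\psi$ and using the triangle inequality yields the pointwise bound
\[
\vert g_x(t,x)\vert\le\vert u_x(t,x)\vert+\vert\psi'(x)\vert\le\vert g(t,x)\vert+C_\psi,\qquad C_\psi\triangleq\tfrac{2k}{3}+\Vert\psi\Vert_{L^\infty}+\Vert\psi'\Vert_{L^\infty},
\]
valid for a.e. $x$. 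Note that $g(t,\cdot)=u(t,\cdot)-\psi\in L^2(\R)$ (as $u(t,\cdot)\in H^s$ and $\psi\in L^2$) and is continuous (indeed $s>3/2$ forces $u\in C^1$, and $\psi$ is Lipschitz), so $g$ is absolutely continuous on bounded intervals and the fundamental theorem of calculus applies.

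Next, fix $t$ and abbreviate $A\triangleq\Vert g(t,\cdot)\Vert_{L^2(\R)}$. If $A=0$ the statement is trivial, so assume $A>0$ and set the interpolation length $\ell\triangleq A^{2/3}$; this scaling is exactly what manufactures the exponent $2/3$. For an arbitrary $x_0\in\R$, apply the mean value theorem to the continuous function $g^2(t,\cdot)$ on the length-$\ell$ interval $[x_0-2\ell,\,x_0-\ell]$ to produce a point $\bar x$ in it with
\[
g^2(t,\bar x)=\frac1\ell\int_{x_0-2\ell}^{x_0-\ell}g^2(t,\eta)\,d\eta\le\frac{A^2}{\ell}=A^{4/3},\qquad\text{hence}\quad\vert g(t,\bar x)\vert\le A^{2/3},
\]
while $\ell\le x_0-\bar x\le 2\ell$. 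Integrating $g_x$ from $\bar x$ to $x_0$ and invoking the gradient bound gives $\vert g(t,x_0)\vert\le\vert g(t,\bar x)\vert+\int_{\bar x}^{x_0}\bigl(\vert g(t,\eta)\vert+C_\psi\bigr)\,d\eta$. The self-interaction term is handled \emph{without} circularity by Cauchy--Schwarz against the $L^2$-norm, $\int_{\bar x}^{x_0}\vert g\vert\le(x_0-\bar x)^{1/2}A\le\sqrt{2\ell}\,A=\sqrt2\,A^{4/3}$, while the constant contributes $\int_{\bar x}^{x_0}C_\psi\le 2\ell\,C_\psi=2C_\psi A^{2/3}$. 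Collecting the three pieces,
\[
\vert g(t,x_0)\vert\le A^{2/3}+\sqrt2\,A^{4/3}+2C_\psi A^{2/3}=A^{2/3}\bigl(1+\sqrt2\,A^{2/3}+2C_\psi\bigr),
\]
and substituting $2C_\psi=\tfrac{4}{3}k+2\Vert\psi\Vert_{L^\infty}+2\Vert\psi'\Vert_{L^\infty}$ recovers \eqref{e:Linfty} after taking the supremum over $x_0$.

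The only genuinely non-routine points, which I would flag as the crux, are two. The first is transferring \eqref{e:uxu} to $g$, where the fixed-sign hypothesis on $w_0$ (hence Lemma \ref{unisign} and global existence) is indispensable: without the a priori bound $\vert u_x\vert\le\vert u+\tfrac{2k}{3}\vert$ there is no control on $g_x$ at all. The second is the calibration $\ell=\Vert g\Vert_{L^2}^{2/3}$, which simultaneously forces the boundary term $\vert g(\bar x)\vert$, the Cauchy--Schwarz bound on $\int\vert g\vert$, and the length-weighted constant term to scale as matching powers of $\Vert g\Vert_{L^2}^{2/3}$; any other length would break the homogeneity or inflate the constants. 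Everything else — the mean value theorem, the triangle inequality, and Cauchy--Schwarz — is elementary, and the decisive structural observation is that $\int\vert g\vert$ is estimated in $L^2$ rather than $L^\infty$, so no implicit inequality in $\Vert g\Vert_{L^\infty}$ ever has to be unwound.
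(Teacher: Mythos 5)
Your proof is correct and takes essentially the same route as the paper's: the identical calibration of the interpolation length $\ell=\Vert g(t,\cdot)\Vert_{L^2}^{2/3}$, the mean value theorem for integrals on an adjacent length-$\ell$ interval, and the fundamental theorem of calculus combined with the gradient bound \eqref{e:uxu} and Cauchy--Schwarz, producing exactly the constant in \eqref{e:Linfty}. The only difference is presentational: the paper proves the pointwise lower bound directly and the upper bound by contradiction, whereas you get both sides at once by estimating $\vert g(t,x_0)\vert$ with absolute values --- a mild streamlining of the same argument.
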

\begin{proof}
Fix $t\in[0,\infty)$, we denote $\al=\Vert g(t,\cdot)\Vert_{L^2(\R)}^{2/3}$ and assume $\alpha>0$, due to the fact that the case $\alpha=0$ makes both sides of \eqref{e:Linfty} zero. Fixing $x\in\R$, there exists $k\in\mathbb{Z}$ such that 
$x\in[k\al,\,(k+1)\al)$. By the mean value theorem, there exists 
$\bar{x}\in[(k-1)\al,\,k\al]$ such that
\[
g^2(t,\bar{x})=\frac{1}{\al}\int_{(k-1)\al}^{k\al}g^2(t,\eta)d\eta\leq\frac{1}{\al}\Vert g(t,\cdot)\Vert_{L^2(\R)}^2=\al^2,
\]
which, together with Proposition \ref{linfty} and that $0\leq x-\bar{x}\leq 2\al$, yields
\begin{equation}\label{ppp}
\begin{aligned}
g(t,x)&=g(t,\bar{x})+\int_{\bar{x}}^x g_\eta(t,\eta)d\eta\\
&\geq-\al-\frac{4\al}{3}k-\sqrt{2\al}\left\Vert\bigg(\vert u(t,\cdot)\vert +\vert \psi'\vert \bigg)\right\Vert_{L^2([(k-1)\al,(k+1)\al])}\\
&\geq-\al-\frac{4\al}{3}k-\sqrt{2\al}\left\Vert\bigg(\vert g(t,\cdot)\vert +\vert \psi\vert +\vert \psi'\vert \bigg)\right\Vert_{L^2([(k-1)\al,(k+1)\al])}\\
&\geq-\al-\frac{4\al}{3}k-\sqrt{2\al}\left[\Vert g(t,\cdot)\Vert_{L^2(\R)}+\sqrt{2\al}\bigg(\Vert \psi\Vert_{L^\infty(\R)}+\Vert \psi'\Vert_{L^\infty(\R)}\bigg)\right]\\
&=-\al\bigg(1+\frac{4}{3}k+\sqrt{2}\al+2\Vert \psi\Vert_{L^\infty(\R)}+2\Vert \psi'\Vert_{L^\infty(\R)}\bigg).
\end{aligned}
\end{equation}
Similarly to the argument in Proposition \ref{linfty}, we use the proof by contradiction and suppose that there exists $x_*\in\R$ such that 
\[
g(t,x_*)>\al\bigg(1+\frac{4}{3}k+\sqrt{2}\al+2\Vert \psi\Vert_{L^\infty(\R)}+2\Vert \psi'\Vert_{L^\infty(\R)}\bigg).
\]
Then there exists $k_*\in \R$ with $x_*\in [k_*\al,(k_*+1)\al)$ 
such that by the mean value theorem, there exists $\bar{x}_*\in[(k_*+1)\al, (k_*+2)\al]$  such that, on one hand,
\[
g^2(t,\bar{x}_*)=\frac{1}{\al}\int_{(k_*+1)\al}^{(k_*+2)\al}g^2(t,\eta)d\eta\leq \al^2.
\]
On the other hand, proceeding as in \eqref{ppp}, we have
\begin{equation*}
\begin{aligned}
u(t,\bar{x}_*)-\psi(\bar{x}_*)
=&u(t,x_*)-\psi(x_*)+\int_{x_*}^{\bar{x}_*}[u_\eta(t,\eta)-\psi'(\eta)]d\eta\\
>&\al\bigg(1+\frac{4}{3}k+\sqrt{2}\al+2\Vert \psi\Vert_{L^\infty(\R)}+2\Vert \psi'\Vert_{L^\infty(\R)}\bigg)-\\
&\frac{4\al}{3}k-\sqrt{2\al}\left\Vert\bigg(\vert u(t,\cdot)\vert +\vert \psi'\vert \bigg)\right\Vert_{L^2([k\al,(k+2)\al])}\\
\geq &\al\bigg(1+\frac{4}{3}k+\sqrt{2}\al+2\Vert \psi\Vert_{L^\infty(\R)}+2\Vert \psi'\Vert_{L^\infty(\R)}\bigg)-\\
&\frac{4\al}{3}k-\al\left(\sqrt{2}\al+2\Vert \psi\Vert_{L^\infty(\R)}+2\Vert \psi'\Vert_{L^\infty(\R)}\right)\\
=&\al.
\end{aligned}
\end{equation*}
Again, the incompatibility of the above two estimates concludes the proof of the proposition.
\end{proof}

\bigskip

\section{Orbital Stability of Degasperis-Procesi Solitons}\label{s:4}
In this  section, we give proofs of Theorem \ref{stability} and \ref{stability2} in order.  The former one is based on  the frame work of Grillakis, \textit{et.al.} \cite{GSS, K-P} with major modifications on nonlinear estimates, while the latter one is a consequence of Theorem \eqref{stability}. To start with, we recall that the DP equation \eqref{kDP} is Hamiltonian in the form of \eqref{DP}; that is,
\[
u_t=J\frac{\delta H}{\delta u}(u),
\]
where $$J\triangleq\partial_x(4-\partial_x^2)(1-\partial_x^2)^{-1},\quad H(u)\triangleq-\frac{1}{6}\int \left ( u^3+6k \left ((4-\partial_x^2)^{-\frac{1}{2}}u \right )^2 \right ) \, dx, $$
whose translation symmetry gives rise to the conserved quantity
\[
S(u)=\frac{1}{2}\int(1-\partial_x^2)(4-\partial_x^2)^{-1}u\cdot udx.
\]
The smooth solitary wave $\phi^c$ of the DP equation is a critical point of the Lagrangian
\[
Q(u;c)=H(u)+cS(u).
\]
We now introduce the scalar function 
\begin{equation}
    \begin{matrix}
    R: & (2k,\infty) & \longrightarrow & \R \\
    & c & \longmapsto & Q(\phi^c,c)
    \end{matrix}
\end{equation}
which is shown to be strictly convex in \cite[Lemma 4.2]{LLW}. 
\begin{lemma}[Convexity, \cite{LLW}]\label{l:convex}
The function $R$ is strictly convex in the sense that 
\begin{equation}\label{e:convex}
 R^{\prime\prime}(c)=\frac{d}{d c} \bigg(S(\phi^c)\bigg)>0, \quad \forall c>2k.   
\end{equation}
\end{lemma}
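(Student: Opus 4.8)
The plan is to reduce the convexity of $R$ to the positivity of $\frac{d}{dc}S(\phi^c)$, and then to evaluate the latter by means of an explicit first integral of the traveling-wave profile. First I would establish the displayed identity itself. Since $\phi^c$ is a critical point of $Q(\cdot\,;c)=H+cS$, we have $H'(\phi^c)+cS'(\phi^c)=0$ in the $L^2$ sense, where $S'(u)=(1-\partial_x^2)(4-\partial_x^2)^{-1}u$ and $H'(u)=-\tfrac12 u^2-2k(4-\partial_x^2)^{-1}u$. Writing $R(c)=H(\phi^c)+cS(\phi^c)$ and differentiating in $c$ (the map $c\mapsto\phi^c$ being $C^1$ by the implicit function theorem applied to the profile equation), the terms carrying $\partial_c\phi^c$ collapse into $\langle H'(\phi^c)+cS'(\phi^c),\partial_c\phi^c\rangle=0$, leaving $R'(c)=S(\phi^c)$ and hence $R''(c)=\frac{d}{dc}S(\phi^c)$. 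This is precisely the first equality in \eqref{e:convex}, so it remains to prove $\frac{d}{dc}S(\phi^c)>0$.

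The backbone of the positivity argument is a first integral of the profile. Integrating the weak traveling-wave equation once and using the decay of $\phi^c$ gives $p*(\tfrac32(\phi^c)^2+2k\phi^c)=c\phi^c-\tfrac12(\phi^c)^2$; applying $(1-\partial_x^2)$ yields the profile ODE $(c-\phi)\phi''-(\phi')^2+2\phi^2-(c-2k)\phi=0$ (suppressing the superscript $c$). Treating $(\phi')^2$ as a function of $\phi$ turns this into a linear first-order ODE with integrating factor $(c-\phi)^2$; the decay condition fixes the integration constant to zero and produces the first integral
\[
(c-\phi)^2(\phi')^2=\phi^2\,G_c(\phi),\qquad G_c(\phi)=\phi^2-\tfrac23(3c-2k)\phi+c(c-2k).
\]
In particular $\Phi_c$ is the smaller positive root of $G_c$, and implicit differentiation of $G_c(\Phi_c)=0$, together with the bound $\Phi_c<c-2k$ from Theorem \ref{profile}, shows $\tfrac{d\Phi_c}{dc}>0$.

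Finally I would evaluate $S(\phi^c)$. Using evenness and the substitution $\xi\mapsto\phi$ with $d\xi=\frac{c-\phi}{\phi\sqrt{G_c(\phi)}}\,d\phi$ on the increasing half, the identity $S(u)=\tfrac12\int u^2\,dx-\tfrac32\int u\,(4-\partial_x^2)^{-1}u\,dx$ reduces (for its local part) to the quadrature $\int_0^{\Phi_c}\frac{\phi(c-\phi)}{\sqrt{G_c(\phi)}}\,d\phi$. Differentiating in $c$, after a careful treatment of the integrable singularity at the endpoint $\phi=\Phi_c$ where $\sqrt{G_c}$ vanishes, the sign of the local contribution can be read off from the rational dependence of $G_c$ on $c$ and the amplitude bounds $\frac{c-2k}{4}<\Phi_c<c-2k$.

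The genuine obstacle is the \emph{nonlocal} term $-\tfrac32\int\phi^c(4-\partial_x^2)^{-1}\phi^c\,dx$. It admits no local reduction through the profile relation and, crucially, its $c$-derivative has the \emph{opposite} sign to that of the local term: as $c$ grows the wave gets taller, so $\int(\phi^c)^2$ increases, but the subtracted convolution energy $\int\phi^c(4-\partial_x^2)^{-1}\phi^c$ increases as well. Thus mere monotonicity of $\phi^c$ in $c$ cannot settle the sign, and a sharp quantitative estimate is required. I would handle this by setting $z=(4-\partial_x^2)^{-1}\phi^c=\tfrac14 e^{-2|\cdot|}*\phi^c$, solving $4z-z''=\phi^c$ along the profile and folding $z$ into the same $\phi$-quadrature, and then exploiting the amplitude constraints to force the dominance of the positive term. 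This last estimate, reconciling the two competing nonlocal and local contributions, is where I expect the real difficulty to lie.
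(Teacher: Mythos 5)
Your reduction of the lemma to the positivity of $\frac{d}{dc}S(\phi^c)$ is correct, and your first integral $(c-\phi)^2(\phi')^2=\phi^2G_c(\phi)$ with $G_c(\phi)=\phi^2-\frac{2}{3}(3c-2k)\phi+c(c-2k)$ agrees with \eqref{e:1int}. But the proposal has a genuine gap: the one assertion that \emph{is} the lemma, namely $\frac{d}{dc}S(\phi^c)>0$, is never proved. You set up a quadrature for the local term, declare the nonlocal term $-\frac32\int\phi\,(4-\partial_x^2)^{-1}\phi\,dx$ to be ``where the real difficulty lies,'' and defer the decisive estimate. An outline whose crucial inequality is left open is not a proof; note also that this lemma is not proved in the present paper either --- it is imported from Lemma 4.2 of \cite{LLW} --- so the burden of that estimate cannot be discharged by citation to anything in the text above.

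Moreover, the obstacle you identify is illusory, and the paper itself shows why. The traveling wave equation \eqref{traveling}, rewritten as $c(\phi-3\rho)=\frac12\phi^2+2k\rho$ with $\rho\triangleq(4-\partial_x^2)^{-1}\phi$, yields the purely algebraic relation \eqref{e:rho}, $\rho=\frac{2c\phi-\phi^2}{6c+4k}$; the convolution of the profile with the kernel of $(4-\partial_x^2)^{-1}$ is a pointwise polynomial function of the profile. Consequently
\begin{equation*}
S(\phi^c)=\frac12\int_\R\phi^2\,dx-\frac32\int_\R\phi\,\rho\,dx
=\frac{1}{3c+2k}\int_\R\Bigl(k\phi^2+\frac34\phi^3\Bigr)\,dx
=\frac{2}{3c+2k}\int_0^{\Phi_c}\Bigl(k+\frac34\phi\Bigr)\frac{\phi\,(c-\phi)}{\sqrt{G_c(\phi)}}\,d\phi,
\end{equation*}
where the last equality is exactly your substitution $d\xi=\frac{(c-\phi)\,d\phi}{\phi\sqrt{G_c(\phi)}}$ on the monotone half. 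So there is no competition between local and nonlocal contributions to reconcile, and no auxiliary function $z$ to solve for: $S(\phi^c)$ is a single explicit $c$-dependent integral. What remains --- and what you have not done --- is to differentiate it in $c$ (after, say, rescaling $\phi=\Phi_c s$ to fix the domain and tame the endpoint singularity, using $\frac{d\Phi_c}{dc}>0$, which you did establish) and verify the sign. That computation is the actual content of the cited lemma; your proposal stops exactly where it begins.
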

Thanks to the equivalence between $L^2$-norm and the square root of $S$ and the fact that $S(\phi^c)$ is strictly monotonic, we only need to prove Theorem \ref{stability} under the conservation constraint
\[
S(u)=S(\phi^c).
\]
\begin{re}
We briefly explain why it is sufficient to give the proof for the special case $S(u)=S(\phi^c)$. Given that Theorem \ref{stability} holds when $S(u)=S(\phi^c)$ for any $c>2k$, we fix some $c_*>2k$ and have that, for any $\epsilon>0$, 
\begin{itemize}
    \item there exists $c_0>0$ so that
    \[
    \|\phi^c-\phi^{c^*}\|_{L^2(\R)}\leq \epsilon/2;
    \]
    \item for any $c\in[c^*-c_0,c^*+c_0]$, there exists $\delta(\epsilon/2, c)$ so that Theorem \ref{stability} holds for $S(u)=S(\phi^c)$ with $\Vert u_0-\phi^c \Vert_{L^2}<\delta(\epsilon/2, c)\implies \inf\limits_{x_0\in\R}\Vert u(t,\cdot)-\phi^c(\cdot-x_0)\Vert_{L^2(\R)}<\frac{\epsilon}{2}$.
\end{itemize}
As a result, taking 
\[
\delta_*\triangleq\min\{\frac{\epsilon}{2}, \min_{|c-c_*|<c_0}\{\delta(\frac{\epsilon}{2},c)\}\}>0,
\]
we have that $\Vert u_0-\phi^{c_*} \Vert_{L^2}<\delta_*$ leads to $\inf\limits_{x_0\in\R}\Vert u(t,\cdot)-\phi^{c_*}(\cdot-x_0)\Vert_{L^2(\R)}<\epsilon$, which concludes the proof for the general case.
\end{re}
For convenience, we from now on fix $c>2k$, suppress the supper index of $\phi^c$ and also introduce a local foliation of a neighborhood of the orbit $\mathcal{M}_c=\{\phi(\cdot+x_0\mid x_0\in\R)\}$. More specifically, there exists $\delta_1>0$ such that for any 
\[
u\in\mathcal{N}_c\triangleq \{v\in L^2(\R)\mid \inf\limits_{x_0\in\R}\Vert v(\cdot)-\phi(\cdot-x_0)\Vert_{L^2(\R)}<\delta_1\},
\]
there exists a unique foliation decomposition 
\[
u=\cat(r)\bigg(\phi+h \bigg),
\]
where $\cat(r)u(\cdot)\triangleq u(\cdot+r)$ is the translation operator and $h\in L^2(\R)$ is perpendicular to $\partial_x\phi$; that is $(h,\partial_x\phi)\triangleq\int_\R h\partial_x\phi dx=0$.If the initial data falls in the neighborhood $\mathcal{N}_c$; that is,
\[
\|u_0-\phi\|_{L^2(\R)}<\delta_1,
\]
there exist a maximal time $T_m>0$ such that the strong solution $u$ stays within $\mathcal{N}_c$ for $t\in[0,T_m)$; that is,
\[
T_m\triangleq \max_{T\geq0}\{T\mid u(t,\cdot)\in\mathcal{N}_c, \forall t\in[0,T)\}>0.
\]
As a result, for $t\in[0,T_m)$, the strong solution admits the foliation decomposition 
\[
u(t,x)=\cat(r(t))\bigg(\phi(x)+h(t, x)\bigg),
\]
where $(h(t,\cdot),\partial_x\phi)=0$. 

We now introduce the time-invariant quantity
\[
\overline{Q_c}\triangleq Q_c(u)-Q_c(\phi),
\]
whose expansion in terms of $h$ admits the expression
\begin{equation}\label{QExp}
\overline{Q_c}=Q_c(\phi(x)+h(t, x))-Q_c(\phi)=\frac{1}{2}(L_ch,h)-\frac{1}{6}\int h^3 d \xi,
\end{equation}
where
\begin{equation}\label{e:Lc}
    L_c\triangleq\frac{\delta^2 Q_c}{\delta u^2}(\phi)
=c-\phi-(3c+2k)(4-\partial_x^2)^{-1},
\end{equation}
and $h(t,\cdot)$ lies in the nonlinear admissible set
\[
\caa\triangleq\{h\in L^2(\R)\mid S(h+\phi)=S(\phi), (h,\partial_x \phi)=0\}.
\]
For the sake of narrative coherency, we give the following proposition and relegate its proof to the end of the section.

\begin{prop}\label{ineq}
For sufficiently small $h\in\caa$, there exist $\alpha, \beta>0$ such that 
\begin{equation}\label{Lbnd}
\frac{1}{2}(L_ch,h)\geq \alpha \|h\|^2_{L^2(\R)}-\beta\|h\|^3_{L^2(\R)}.
\end{equation}
\end{prop}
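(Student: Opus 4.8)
The plan is to treat $L_c$ as a self-adjoint operator on $L^2(\R)$, first establish a \emph{linearized} coercivity on the codimension-two subspace carved out by the two linear constraints approximating $\caa$, and then transfer the bound to the genuinely nonlinear set $\caa$, the cubic remainder $-\be\|h\|_{L^2}^3$ being produced entirely by this final transfer. To set up, I would record the structure of $L_c=c-\phi-(3c+2k)(4-\partial_x^2)^{-1}$: writing it as $c$ minus the bounded self-adjoint operator $\phi+(3c+2k)(4-\partial_x^2)^{-1}$, and using that $\phi$ is localized while $(4-\partial_x^2)^{-1}$ has Fourier symbol $(4+\xi^2)^{-1}$, a Weyl-sequence computation places the essential spectrum of $L_c$ in $[\tfrac{c-2k}{4},c]$, bounded away from the origin by the physical condition $c>2k$. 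Below this essential spectrum I would invoke from \cite{LLW} that $L_c$ has exactly one simple negative eigenvalue and that $\ker L_c=\spn\{\partial_x\phi\}$, the latter coming from differentiating the profile equation $Q_c'(\phi)=0$ in the translation parameter.

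Next I would extract the sign condition coupling this spectrum to the constraint. Abbreviating $B\triangleq(1-\partial_x^2)(4-\partial_x^2)^{-1}$, so that $S'(\phi)=B\phi$, differentiation of $Q_c'(\phi^c)=0$ in the speed $c$ produces the identity $L_c\,\partial_c\phi=-B\phi$. Since $\phi^c$ is even for every $c$ by Theorem \ref{profile}, both $B\phi$ and $\partial_c\phi$ are even and hence $L^2$-orthogonal to the odd function $\partial_x\phi$; thus $\partial_c\phi\in(\ker L_c)^\perp$ and the reduced inverse satisfies $L_c^{-1}(B\phi)=-\partial_c\phi$. Consequently
\[
(L_c^{-1}B\phi,B\phi)=-(B\phi,\partial_c\phi)=-\frac{d}{dc}S(\phi^c)=-R''(c)<0,
\]
the strict inequality being exactly Lemma \ref{l:convex}.

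The heart of the argument, and the step I expect to be the main obstacle, is the linearized coercivity: there is $\al_0>0$ with $(L_ch,h)\geq\al_0\|h\|_{L^2}^2$ for every $h$ in $V\triangleq\{h:(h,\partial_x\phi)=0,\ (B\phi,h)=0\}$. The first constraint removes the kernel, so on $(\ker L_c)^\perp$ the form $(L_c\,\cdot,\cdot)$ has a single negative direction; the constrained-minimization lemma of Grillakis--Shatah--Strauss \cite{GSS} (see also \cite{K-P}) then guarantees that the one remaining constraint $(B\phi,h)=0$ annihilates that negative direction precisely when $(L_c^{-1}B\phi,B\phi)<0$, which the previous step supplied. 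Strict positivity on $V$ together with the positive gap $\inf\sigma_{\mathrm{ess}}(L_c)=\tfrac{c-2k}{4}>0$ then upgrades to a uniform constant $\al_0$. This is the delicate point: it is here that the spectral count, the kernel identification, and the strict convexity must be combined at once, and although each is available from \cite{LLW} and Lemma \ref{l:convex}, the index bookkeeping yielding strict positivity on a codimension-two subspace is what demands care.

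Finally I would pass from $V$ to $\caa$. As $S$ is quadratic,
\[
0=S(\phi+h)-S(\phi)=(B\phi,h)+S(h),
\]
so every $h\in\caa$ obeys $(B\phi,h)=-S(h)$, a quantity controlled by $\tfrac12\|h\|_{L^2}^2$. Choosing the fixed direction $\zeta\triangleq\partial_c\phi/R''(c)$, which satisfies $(\zeta,\partial_x\phi)=0$ and $(B\phi,\zeta)=1$, I would split $h=h_0+a\zeta$ with $a=(B\phi,h)=-S(h)=O(\|h\|_{L^2}^2)$ and $h_0\in V$. The decisive simplification is that the cross term vanishes, since $(L_ch_0,\zeta)=(h_0,L_c\zeta)=-R''(c)^{-1}(h_0,B\phi)=0$ when $h_0\in V$; hence
\[
(L_ch,h)=(L_ch_0,h_0)+a^2(L_c\zeta,\zeta)\geq\al_0\|h_0\|_{L^2}^2-Ca^2.
\]
Combining this with $\|h_0\|_{L^2}^2\geq\|h\|_{L^2}^2-2|a|\,\|\zeta\|_{L^2}\|h\|_{L^2}$ and $|a|=O(\|h\|_{L^2}^2)$ collapses every error into a single cubic term, giving $(L_ch,h)\geq\al_0\|h\|_{L^2}^2-C'\|h\|_{L^2}^3$ for $\|h\|_{L^2}$ small; halving yields \eqref{Lbnd} with $\al=\al_0/2$ and $\be=C'/2$.
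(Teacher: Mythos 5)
Your proposal is correct, and its skeleton coincides with the paper's: both reduce the claim to linear coercivity of $(L_c\cdot,\cdot)$ on a codimension-two subspace (your $V$ is exactly the paper's $\caa^\prime$, your $B\phi$ its $\widetilde{\psi}$), both hinge on the same sign condition $(L_c^{-1}B\phi,B\phi)=-\frac{d}{dc}S(\phi^c)=-R''(c)<0$ obtained from $L_c\partial_c\phi=-B\phi$ and Lemma \ref{l:convex}, and both transfer to the nonlinear set $\caa$ by a rank-one correction of amplitude $O(\|h\|_{L^2}^2)$, which is where the cubic term originates. The genuine difference lies in how the constrained coercivity is certified. You invoke the Grillakis--Shatah--Strauss/Kapitula--Promislow index lemma as a black box; the paper proves it by hand: it forms the projected operator $L_c^\Pi$, shows the constrained ground state obeys $\widetilde{\lambda}_*>\lambda_*$ --- which needs the problem-specific input $(\phi_*,\widetilde{\psi})\neq 0$, established by proving that both $\phi_*$ and $\widetilde{\psi}=\frac{3\phi+4k}{2(3c+2k)}\phi$ are positive --- and then pins down $\widetilde{\lambda}_*$ as the unique zero of the increasing resolvent function $g(\lambda)=((L_c-\lambda)^{-1}\widetilde{\psi},\widetilde{\psi})$, so that $\widetilde{\lambda}_*>0$ exactly when $g(0)=-R''(c)<0$. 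Your route is shorter and avoids eigenfunction positivity altogether (the abstract lemma rests instead on a two-dimensional negative-subspace argument); the paper's is self-contained, at the price of the extra positivity analysis. Your correction direction $\zeta=\partial_c\phi/R''(c)$, which kills the cross term $(L_ch_0,\zeta)$ identically, is also tidier than the paper's decomposition $\widetilde{h}=h+a\phi$, where cross terms are merely absorbed into an $O(\|\widetilde{h}\|^3_{L^2})$ error. One caution: your ``Weyl-sequence'' justification of the essential-spectrum location is too quick --- Weyl sequences certify membership in the essential spectrum, whereas coercivity needs the reverse inclusion $\sigma_{\mathrm{ess}}(L_c)\subseteq[\delta_1,\infty)$, and multiplication by the decaying function $\phi$ is not an obviously relatively compact perturbation here; this is harmless, though, since the needed gap is precisely item (4) of Theorem \ref{spectrum}, which you should simply cite from \cite{LLW} as you already do for the eigenvalue count and the kernel.
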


We now give the proof of Theorem \ref{stability}.
\begin{proof}[Proof of Theorem \ref{stability}] 
For convenience, we assume $\delta<1$. We first derive an upper bound of $|\overline{Q_c}|$ in terms of the $L^2$ norm of $ h_0(x)\triangleq h(0,x)$. Combining the expansion \eqref{QExp} and the expression of $L_c$ as in \eqref{e:Lc}, we have 
\begin{equation}\label{e:upbd1}
    \begin{aligned}
|\overline{Q_c}|
&\leq c\|h_0\|_{L^2(\R)}^2
+\frac{1}{6}\Vert h_0\Vert_{L^\infty(\R)}\cdot\Vert h_0\Vert_{L^2(\R)}^2.
\end{aligned}
\end{equation}
According to Proposition \ref{hlinfty}, the $L^\infty$ norm of $h$ admits the following estimate
\begin{equation}\label{e:Qcinq2}
    \begin{aligned}
\Vert h(t,\cdot)\Vert_{L^\infty(\R)}=&\Vert \cat(r(t))h(t,\cdot)\Vert_{L^\infty(\R)}=\Vert u(t,\cdot)-\cat(r(t))\phi(\cdot)\Vert_{L^\infty}\\
\leq& \Vert h(t,\cdot)\Vert_{L^2(\R)}^{2/3}\bigg(1+\frac{4}{3}k+\sqrt{2}\Vert h(t,\cdot)\Vert_{L^2(\R)}^{2/3}+2\Vert \phi\Vert_{L^\infty(\R)}+2\Vert \phi'\Vert_{L^\infty(\R)}\bigg).
\end{aligned}
\end{equation}
The estimate \eqref{e:Qcinq2} for $t=0$, plugged into the estimate \eqref{e:upbd1}, leads to 
\begin{equation}\label{e:upbd}
    \begin{aligned}
    |\overline{Q_c}|\leq c\|h_0\|_{L^2(\R)}^2+\gamma\|h_0\|_{L^2(\R)}^{8/3}+\frac{\sqrt{2}}{6}\|h_0\|_{L^2(\R)}^{10/3}<K\delta^2,
    \end{aligned}
\end{equation}
where $\gamma(c,k)\triangleq \frac{1}{6} \big(1+\frac{4}{3}k+2\Vert \phi\Vert_{L^\infty(\R)}+2\Vert \phi'\Vert_{L^\infty(\R)}\big)$ and $K\triangleq\max\{c,\gamma, \frac{\sqrt{2}}{6}\}$.
Similarly, we also derive a lower bound of $|\overline{Q_c}|$ in terms of the $L^2$ norm of $ h_0(x)\triangleq h(0,x)$. We first conclude form the expansion \eqref{QExp} and the inequality \eqref{Lbnd} that
\begin{equation}\label{e:Qcinq1}
    \begin{aligned}
|\overline{Q_c}|
&\geq \alpha\|h\|_{L^2(\R)}^2-\beta\|h\|^3_{L^2(\R)}
-\frac{1}{6}\Vert h\Vert_{L^\infty(\R)}\cdot\Vert h\Vert_{L^2(\R)}^2.
\end{aligned}
\end{equation}
which, together with the inequality \eqref{e:Qcinq2}, yields that
\begin{equation}\label{Qcinq}
\begin{aligned}
|\overline{Q_c}|
&\geq \alpha\|h\|_{L^2(\R)}^2-\beta\|h\|^3_{L^2(\R)}
-\frac{1}{6}\Vert h\Vert_{L^2(\R)}^{8/3}\bigg(1+\frac{4}{3}k+\sqrt{2}\Vert h\Vert_{L^2(\R)}^{2/3}+2\Vert \phi\Vert_{L^\infty(\R)}+2\Vert \phi'\Vert_{L^\infty(\R)}\bigg)\\
&=\alpha\|h\|_{L^2(\R)}^2-\gamma\|h\|_{L^2(\R)}^{\frac{8}{3}}-\beta\|h\|^3_{L^2(\R)}
-\frac{\sqrt{2}}{6}\|h\|_{L^2(\R)}^{\frac{10}{3}},
\end{aligned}
\end{equation}
For small $|\overline{Q_c}|$, the function 
\[
f(r)\triangleq |\overline{Q_c}|-\alpha r^2+\gamma r^{\frac{8}{3}}+\beta r^3
+\frac{\sqrt{2}}{6}r^{\frac{10}{3}}
\]
admits two consecutive positive roots
\[
0<r_1=\mathcal{O}(|\overline{Q_c}|^{1/2})<r_2=\mathcal{O}(1),
\]
which, together with the estimate \eqref{e:upbd}, shows that 
\[
r_1=\mathcal{O}(\delta).
\]
As a result, there exists $\delta_0\in(0,1)$ such that \[
r_1<\min\{\epsilon,\delta_1,\frac{1}{2}\left(\frac{\alpha}{\gamma}\right)^{3/2}\}<r_2.\] Furthermore,
we conclude from the inequality \eqref{Qcinq} and the continuity of $h(t)$ that if $\|h_0\|_{L^2(\R^2)}\in(0, r_1)$, then $\|h(t,\cdot)\|_{L^2(\R^2)}\in(0, r_1)$ holds globally for $t\in[0,\infty)$. 
Therefore,  for any $\varepsilon >0$, we can choose $\delta=\delta_0$ such that if
\[
\|u_0-\phi\|_{L^2(\R)}=\|h_0\|_{L^2(\R)}\leq \delta,
\]
then
 \[
 \inf\limits_{r\in\R}\|u(t,\cdot)-\cat(r)\phi\|_{L^2(\R)}=\|h(t)\|_{L^2(\R)}<r_1<\varepsilon,\quad \forall t\in[0,\infty).
 \]
 Noting that the $L^\infty$ estimate in the theorem follows from the $L^2$ estimate and Proposition \ref{hlinfty}, we conclude the proof of Theorem \ref{stability}.
\end{proof}

Exploiting the properties of the solitary wave $\phi$, Theorem \ref{stability2} naturally leads to Corollary \ref{stability2} whose proof is given below.
\begin{proof}[Proof of Corollary \ref{stability2}]Based on Theorem \ref{stability}, the proof of Theorem \ref{stability2} boils down to verify that if $u_0\in H^3(\R)$ with $\Vert u_0\phi\Vert_{H^3(\R)}<\frac{2\sqrt{2}}{3}k$, then $w_0=u_0-u_{0xx}+\frac{2k}{3}$ is a positive Radon measure. The fact that $w_0$ is a Radon measure follows directly from $u_0\in H^3(\R)$ and thus we are left to show that $w_0$ is positive everywhere. In order to do that, we recall that the solitary wave $\phi$ is a critical point of the Lagrangian $Q_c=H+cS$; that is,
\[
\frac{\delta Q_c}{\delta u}(\phi)=0,
\]
which admits the expression
\begin{equation}\label{traveling}
-[\frac{1}{2}\phi^2+(4-\partial_x^2)^{-1}2k\phi]+c(1-\partial_x^2)(4-\partial_x^2)^{-1}\phi=0.
\end{equation}
Applying $(4-\partial_x^2)$ to both sides, equation \eqref{traveling} becomes a second-order ODE
\begin{equation}\label{e:tODE}
    (c-\phi)(\phi-\phi_{xx})=\phi^2+2k\phi-\phi_x^2,
\end{equation}
which admits a first integral
\[
    \Phi(\phi,\phi_x)=k\phi^2(\frac{2}{3}\phi-c)+\frac{1}{2}(c-\phi)^2(\phi^2-\phi_x^2).
\]
A straightforward analysis \cite{LLW} shows that the solitary wave $\phi$ is in fact a component of the level set $\Phi(\phi,\phi_x)=0$, yielding 
\begin{equation}\label{e:1int}
    \phi_x^2=\frac{2k\phi^2(\frac{2}{3}\phi-c)}{(c-\phi)^2}+\phi^2.
\end{equation}
Combining \eqref{e:tODE} and \eqref{e:1int}, we have
\[
\phi-\phi_{xx}=\frac{2k}{3}\left(\frac{c^3}{(c-\phi)^3}-1\right)>0.
\]
It follows that if $\Vert u_0-\phi\Vert_{H^3}\leq \frac{2\sqrt{2}}{3}k$, then 
\[
\vert(u_0-u_{0,xx})-(\phi-\phi_{xx})\vert\leq\frac{1}{\sqrt{2}}\Vert u_0-\phi\Vert_{H^3}\leq\frac{2k}{3},
\]
and $w_0=u_0-u_{0xx}+\frac{2k}{3}\geq \phi-\phi_{xx}>0$, which concludes the proof. 
\end{proof}
We are now left to prove Proposition \ref{ineq}. In order to do so, we denote $$\widetilde{\psi}\triangleq(1-\partial_x^2)(4-\partial_x^2)^{-1}\phi,$$ and introduce the linear admissible space
\[
\caa^\prime\triangleq \{h\in L^2(\R)\mid (h,\widetilde{\psi})=(h,\partial_x\phi)=0\}.
\]
It is then straightforward to see that any $\widetilde{h}\in\caa$ with $\|\widetilde{h}\|_{L^2(\R)}$ sufficiently small admits the decomposition
\[
\widetilde{h}=h+a\phi,
\]
where $h\in \caa^\prime$ with $\Vert h\Vert_{L^2(\R)}=\mathcal{O}(\|\widetilde{h}\|_{L^2(\R)})$ and $|a|=\mathcal{O}(\|\widetilde{h}\|^2_{L^2(\R)})$. As a result, 
\[
(L_c\widetilde{h},\widetilde{h})=(L_c(h+a\phi),h+a\phi)=(L_ch,h)+\mathcal{O}(\|\widetilde{h}\|^3_{L^2(\R)}).
\]
Therefore, to prove Proposition \ref{ineq}, it suffices to prove the following lemma.
\begin{lemma}\label{bilin}
For $h\in\caa^\prime$, there exists $\alpha>0$ such that 
\[
(L_ch,h)\geq  \alpha\|h\|_{L^2}^2.
\]
\end{lemma}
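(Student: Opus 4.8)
The plan is to establish the coercivity of the bilinear form $(L_c h, h)$ on the linear admissible space $\caa'$ by analyzing the spectrum of the self-adjoint operator $L_c=c-\phi-(3c+2k)(4-\partial_x^2)^{-1}$ and controlling its negative directions via the two orthogonality constraints defining $\caa'$. The operator $L_c$ is a compact perturbation of the multiplication-plus-bounded operator $c-\phi$ minus a smoothing term, so its essential spectrum is governed by the behavior at spatial infinity, where $\phi\to 0$; concretely, the essential spectrum is $\big[c-\tfrac{3c+2k}{4},\,\infty\big)=\big[\tfrac{c-2k}{4},\,\infty\big)$, which is bounded away from zero since $c>2k$. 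Thus the only threat to coercivity comes from finitely many eigenvalues below $\tfrac{c-2k}{4}$, in particular any nonpositive ones.

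First I would pin down the discrete spectrum of $L_c$ near and below zero. Differentiating the traveling-wave equation $\frac{\delta Q_c}{\delta u}(\phi)=0$ in $x$ shows that $L_c(\partial_x\phi)=0$, so $\partial_x\phi$ lies in the kernel; since $\phi$ is even, $\partial_x\phi$ is odd with exactly one sign change, which by Sturm–Liouville-type oscillation theory identifies $0$ as the \emph{second} eigenvalue of $L_c$. Consequently $L_c$ has exactly one simple negative eigenvalue $-\lambda_0<0$ with a positive (even) ground-state eigenfunction $\chi_0$, and the rest of the spectrum is strictly positive, bounded below away from $0$ by some gap $\nu>0$ on the orthogonal complement of $\mathrm{span}\{\chi_0,\partial_x\phi\}$. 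This spectral picture is exactly what was established in the authors' prior spectral-stability work \cite{LLW}, so I would cite it rather than reprove the oscillation count.

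With this in hand, the heart of the argument is to show that the single negative direction $\chi_0$ is excluded by the constraint $(h,\widetilde\psi)=0$, where $\widetilde\psi=(1-\partial_x^2)(4-\partial_x^2)^{-1}\phi$. The crucial identity here is that $\widetilde\psi$ is, up to the skew-symmetric operator $J$, the image of $\phi$ under the conserved-quantity gradient: indeed $\widetilde\psi=\tfrac{\delta S}{\delta u}(\phi)$, and differentiating the relation $L_c\,\partial_c\phi = -\widetilde\psi$ (obtained by $\partial_c$ of the profile equation) gives $(L_c^{-1}\widetilde\psi,\widetilde\psi)=-(\partial_c\phi,\widetilde\psi)=-\tfrac{d}{dc}S(\phi^c)=-R''(c)<0$ by the convexity Lemma \ref{l:convex}. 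The standard constrained-minimization lemma (see \cite{GSS}) then asserts: if $L_c$ has a single negative eigenvalue, a kernel spanned by $\partial_x\phi$, and $(L_c^{-1}\widetilde\psi,\widetilde\psi)<0$, then $L_c$ is positive definite on the codimension-two space $\caa'=\{h:(h,\widetilde\psi)=(h,\partial_x\phi)=0\}$, i.e. there is $\alpha>0$ with $(L_ch,h)\ge\alpha\|h\|_{L^2}^2$. I would verify the hypotheses of that lemma and invoke it directly.

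The main obstacle I anticipate is not the abstract constrained-positivity lemma but the rigorous spectral setup for $L_c$ on $L^2(\R)$: establishing self-adjointness and the precise essential-spectrum threshold, and above all justifying the oscillation/eigenvalue-count that yields \emph{exactly one} negative eigenvalue with $0$ as the second eigenvalue. Because $L_c$ involves the nonlocal term $(4-\partial_x^2)^{-1}$ rather than a differential operator, the classical Sturm oscillation theorem does not apply verbatim, and one must argue via a perturbation or a reformulation (for instance conjugating by $(4-\partial_x^2)^{-1/2}$, or using a continuation-in-$c$ argument) to transfer the nodal information of $\partial_x\phi$ into a spectral count. Since this delicate analysis was carried out in \cite{LLW}, my proof would lean on it as a black box and concentrate on assembling the three spectral facts plus the convexity identity $(L_c^{-1}\widetilde\psi,\widetilde\psi)=-R''(c)<0$ to drive the constrained-positivity conclusion; the computation of $\|h\|_{L^2}=\mathcal O(\|\widetilde h\|_{L^2})$ and $|a|=\mathcal O(\|\widetilde h\|_{L^2}^2)$ in the reduction from $\caa$ to $\caa'$ is then a routine consequence of solving $S(\phi+h+a\phi)=S(\phi)$ for $a$ by the implicit function theorem.
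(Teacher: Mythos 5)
Your proposal is correct and takes essentially the same route as the paper: both rest on the spectral facts for $L_c$ cited from \cite{LLW}, the identity $L_c\,\partial_c\phi=-\widetilde{\psi}$ obtained by differentiating the profile equation in $c$, and the convexity $\frac{d}{dc}S(\phi^c)>0$ to get $(L_c^{-1}\widetilde{\psi},\widetilde{\psi})<0$, which rules out the single negative direction on $\caa^\prime$. The only difference is packaging: where you invoke the standard constrained-positivity lemma of \cite{GSS} as a black box, the paper proves that step in-house (showing $(\phi_*,\widetilde{\psi})\neq 0$ via sign arguments and using monotonicity of $g(\lambda)=((L_c-\lambda)^{-1}\widetilde{\psi},\widetilde{\psi})$); also, your claimed essential spectrum $[\frac{c-2k}{4},\infty)$ should be a bounded set since $L_c$ is a bounded operator, though only the positive lower bound matters for the argument.
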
 
The proof of this lemma relies essentially on the spectral properties of the operator $L_c$, which has been proved in our previous work \cite{LLW} and cited below.
\begin{theorem}\cite{LLW}\label{spectrum}
The spectrum set of the operator $L_c: L^2(\R)\to L^2(\R)$, denoted as $\sigma(L_c)$, admits the following properties.
\begin{enumerate}
\item The spectrum set $\sigma(L_c)$ lies on the real line; that is, $\sigma(L_c)\subset \R$.
\item $0$ is a simple eigenvalue of $L_c$ with $\phi_x$ as its eigenfunction.
\item On the negative axis $(-\infty, 0)$, the spectrum set $\sigma(L_c)$ admits nothing but only one simple eigenvalue, denoted as $\lambda_*$, with its corresponding normalized eigenfunction, denoted as $\phi_*$.
\item The set of essential spectrum $\sigma_{ess}(L_c)$ lies on the positive real axis, admitting a positive distance to the origin.
\end{enumerate} 
\end{theorem}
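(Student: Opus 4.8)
The plan is to read off the four properties from the structure of $L_c=(c-\phi)-(3c+2k)(4-\partial_x^2)^{-1}$ as a \emph{bounded self-adjoint} operator on $L^2(\R)$: it is multiplication by the real bounded function $c-\phi$ minus a real multiple of the bounded positive Fourier multiplier $(4-\partial_x^2)^{-1}$, so Property (1), $\sigma(L_c)\subset\R$, is immediate. The engine for the other three properties is the reduction of the eigenvalue equation $L_ch=\lambda h$ to a second-order ODE. Writing it as $(c-\phi-\lambda)h=(3c+2k)(4-\partial_x^2)^{-1}h$, applying $(4-\partial_x^2)$, and multiplying by the integrating factor $c-\phi-\lambda$ (strictly positive for $\lambda\le0$ since $\phi\le\Phi_c<c-2k$), I would bring the problem to the Sturm--Liouville divergence form
\[
-\frac{d}{dx}\left[(c-\phi-\lambda)^2 h'\right]+(c-\phi-\lambda)\left(c-2k-4\lambda-4\phi+\phi''\right)h=0 .
\]
Because $\phi$ and its derivatives decay exponentially, the coefficients tend to constants at $\pm\infty$ and the equation has an exponential dichotomy for $\lambda$ below the essential spectrum — the fact underlying Properties (2) and (3).

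For Property (4) I would locate the essential spectrum through the two regimes at infinity in phase space. Near spatial infinity $\phi\to0$, so $L_c$ is asymptotic to the Fourier multiplier $c-(3c+2k)(4-\partial_x^2)^{-1}$ with symbol $c-(3c+2k)/(4+\xi^2)$ filling the band $[\tfrac{c-2k}{4},c)$; at high frequency $(4-\partial_x^2)^{-1}\to0$, so $L_c$ is asymptotic to multiplication by $c-\phi$, whose essential range is $[c-\Phi_c,c]$. Weyl singular sequences of the two types — oscillatory bumps $\chi(x-x_n)e^{i\xi_0x}$ pushed to $x_n\to\infty$, and high-frequency bumps $\chi((x-x_0)/\delta)e^{i\xi_nx}$ with $\xi_n\to\infty$ concentrated at a point $x_0$ — exhibit both bands inside $\sigma_{ess}(L_c)$, and a limit-operator (phase-space localization) argument shows nothing else appears. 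Since $c>2k$ gives $\tfrac{c-2k}{4}>0$ and $\Phi_c<c-2k$ gives $c-\Phi_c>2k>0$, the essential spectrum sits in $(0,\infty)$ at distance $\min\{\tfrac{c-2k}{4},c-\Phi_c\}>0$ from the origin, which is Property (4).

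Property (2) splits into containment and simplicity. That $\phi_x\in\ker L_c$ is the infinitesimal translation symmetry: differentiating the traveling-wave identity \eqref{traveling} in $x$ yields $L_c\phi_x=0$. Simplicity follows from the displayed ODE at $\lambda=0$, a regular equation whose solution space is two-dimensional; $\phi_x$ is the solution decaying at both ends, and Abel's formula (the Wronskian scales like $(c-\phi)^{-2}$) forces the second independent solution to grow at infinity, hence to leave $L^2$. Therefore $\ker L_c=\spn\{\phi_x\}$.

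Property (3) is the crux and the main obstacle. At least one negative eigenvalue comes from the convexity Lemma \ref{l:convex}: differentiating \eqref{traveling} in the speed $c$ gives $L_c\partial_c\phi=-\widetilde\psi$, so that
\[
(L_c\partial_c\phi,\partial_c\phi)=-(\widetilde\psi,\partial_c\phi)=-\frac{d}{dc}S(\phi^c)=-R''(c)<0 ,
\]
and $L_c$ fails to be positive semidefinite. That there is \emph{at most} one — hence exactly one, simple, with $0$ the next eigenvalue up — I would read from Sturm oscillation theory: by Theorem \ref{profile} the profile $\phi$ is even and strictly monotone on each half-line, so $\phi_x$ has a single, simple node at the origin, and the oscillation count identifies the number of eigenvalues below $0$ with this nodal number, namely one. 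The genuine difficulty is that the nonlocal term prevents $L_c$ from being a Schr\"odinger operator and, more seriously, $\lambda$ enters the reduced ODE nonlinearly through both $(c-\phi-\lambda)^2$ and the zeroth-order coefficient, so the classical oscillation theorem does not apply verbatim; the real work is to establish the monotone dependence of the nodal count on $\lambda$ on the interval below the essential spectrum — via a Pr\"ufer-angle computation or a continuation argument for the self-adjoint family $L_c-\lambda$ — which pins the Morse index of $L_c$ at exactly one.
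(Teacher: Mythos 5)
A preliminary remark on the comparison itself: this paper contains no proof of Theorem \ref{spectrum} --- the statement is imported wholesale from \cite{LLW} --- so the only internal evidence of how it is actually proved is the trace visible in the proof of Lemma \ref{bilin}, where the eigenvalue equation $L_c\phi_*=\lambda_*\phi_*$ is recast as the Schr\"odinger-type ODE $\partial_x^2\rho_*-A(x,\lambda_*)\rho_*=0$ with $A(x,\lambda)=\frac{c-2k-4\phi(x)-4\lambda}{c-\phi(x)-\lambda}$ and $\rho_*=(4-\partial_x^2)^{-1}\phi_*$ even and positive. Measured against that, most of your proposal is sound: property (1) from bounded self-adjointness; $L_c\phi_x=0$ by differentiating \eqref{traveling} in $x$, with simplicity from the Wronskian/reduction-of-order argument (your divergence form and the Abel computation check out); the Weyl-sequence construction of the two bands $[\frac{c-2k}{4},c]$ and $[c-\Phi_c,c]$, both bounded away from $0$ by Theorem \ref{profile} (the asserted upper containment via limit operators is left as a sketch, but is standard for sums of multipliers and multiplications by decaying functions); and the lower half of (3), since $L_c\partial_c\phi=-\widetilde{\psi}$ gives $(L_c\partial_c\phi,\partial_c\phi)=-\frac{d}{dc}S(\phi^c)=-R''(c)<0$ by Lemma \ref{l:convex} --- indeed this is the very identity the paper itself uses at the end of the proof of Lemma \ref{bilin}.

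The genuine gap is exactly where you flag it: ``at most one negative eigenvalue.'' As written, the crux of the theorem is announced rather than proved --- you defer ``the real work'' of establishing monotone dependence of the nodal count on $\lambda$ --- and, worse, your chosen gauge makes it doubtful the deferred Pr\"ufer computation closes. For your equation $-\frac{d}{dx}\left[p_\lambda h'\right]+q_\lambda h=0$ with $p_\lambda=(c-\phi-\lambda)^2$ and $q_\lambda=(c-\phi-\lambda)(c-2k-4\lambda-4\phi+\phi'')$, monotonicity of the Pr\"ufer angle in $\lambda$ requires $\partial_\lambda(1/p_\lambda)\geq 0$ (true) together with $\partial_\lambda q_\lambda\leq 0$; but $\partial_\lambda q_\lambda=-(5c-2k-8\lambda-8\phi+\phi'')$, and after eliminating $\phi''$ via \eqref{e:tODE} the bracket has no evident sign for general $c>2k$. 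The missing idea --- and the route of \cite{LLW}, as the trace in Lemma \ref{bilin} shows --- is to substitute $\rho=(4-\partial_x^2)^{-1}h$ instead, turning $L_ch=\lambda h$ into $\rho''-A(x,\lambda)\rho=0$, in which the spectral parameter enters only through the potential and \emph{strictly monotonically}: $\partial_\lambda A(x,\lambda)=-\frac{3c+2k}{(c-\phi-\lambda)^2}<0$ uniformly for $\lambda\leq 0$. In that gauge classical Sturm oscillation theory applies verbatim: at $\lambda=0$ the relevant solution is $\rho=(4-\partial_x^2)^{-1}\phi_x=\frac{(c-\phi)\phi'}{3c+2k}$ by \eqref{e:rho}, with exactly one simple zero, and monotone continuation in $\lambda$ pins down exactly one $\lambda_*<0$ carrying a zero-free decaying solution $\rho_*>0$ --- precisely the positivity of $\rho_*$ that the present paper later exploits to show $(\phi_*,\widetilde{\psi})\neq 0$. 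So you correctly identified the obstruction (nonlinear dependence of the reduced ODE on $\lambda$), but stopped exactly at the theorem's content, and the divergence form in $h$ is the wrong change of variables with which to carry out the continuation; the $(4-\partial_x^2)^{-1}$ substitution is what makes the nodal-count argument rigorous.
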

We now give the proof of Lemma \ref{bilin}.
\begin{proof}[Proof of Lemma \ref{bilin}] We first introduce the projection
\[
\Pi u= u-\frac{(u, \widetilde{\psi} )}{(\widetilde{\psi},\widetilde{\psi})}\widetilde{\psi}.
\]
According to property (2) in Theorem \ref{spectrum}, the constrained operator 
\[
L_c^\Pi\triangleq \Pi L_c: \caa^\prime\to\caa^\prime 
\]
is self-adjoint and bounded invertible, and thus admits only real spectra. Noting that 
\[
(L_ch,h)=(L_c^\Pi h, h), \quad \forall h\in\caa^\prime,
\]
the proof of the lemma boils down to show that there exists $\delta_0 >0$ such that 
\[
\sigma(L_c^\Pi)\subseteq [\delta_0,\infty).
\]
Note that the essential spectrum of $L_c^\Pi$ and $L_c$ are the same, due to the fact that $L_c^\Pi-L_c$ restricted to $\caa^\prime$ is rank one and thus compact. According to property (4) of Theorem \ref{spectrum}, there exists $\delta_1>0$ such that 
\[
\sigma_{{\rm ess}}(L_c^\Pi)=\sigma_{{\rm ess}}(L_c)\subseteq (\delta_1,\infty).
\]
We now just need to show that the smallest eigenvalue of $L_c^\Pi$, called the ground-state eigenvalue and denoted as $\widetilde{\lambda}_*$, if there is any, is strictly positive. According to property (3) of Theorem \ref{spectrum}, $\lambda_*$ is the ground-state eigenvalue of the full linear operator $L_c$ with its eigenfunction $\phi_*$, yielding
\[
\widetilde{\lambda}_*=\inf\limits_{u\in\caa^\prime}\frac{(L_cu, u)}{(u,u)}\geq \inf\limits_{u\in L^2}\frac{(L_cu, u)}{(u,u)}=\lambda_*.
\]
As a matter of fact, we have an improved estimate
\[
\widetilde{\lambda}_*>\lambda_*,
\]
thanks to the fact that $\phi_*\not\in\caa^\prime$, which, in turn, is a natural consequence of the fact that
\[
(\phi_*,\widetilde{\psi})\neq0.
\]
To show that $(\phi_*,\widetilde{\psi})\neq0$, it suffices to prove that both $\phi_*$ and $\widetilde{\phi}$ are functions of fixed sign. Noting that $L_c\phi_*=\lambda_*\phi_*$ can be rewritten as 
\[
\partial_x^2\rho_*-A(x,\lambda_*)\rho_*=0
\]
where $$A(x,\lambda)\triangleq \frac{c-2k-4\phi(x)-4\lambda}{c-\phi(x)-\lambda}$$
and $\rho_*\triangleq (4-\partial_\xi^2)^{-1}\phi_*$ is even and everywhere positive; see the proof of Theorem \ref{spectrum} in \cite{LLW} for details. 
As a result, we have
\[
\phi= (4-\partial_x^2)\rho_*=(4-A(x,\lambda_*))\rho_*=\frac{3c+2k}{c-\phi-\lambda_*}\rho_*>0.
\]
On the other hand, we recall \[\widetilde{\psi}=(1-\partial_\xi^2)(4-\partial_\xi^2)^{-1}\phi=\phi-3(4-\partial_\xi^2)^{-1}\phi,\] where the profile $\rho\triangleq (4-\partial_\xi^2)^{-1}\phi$ can be expressed in terms of $\phi$. More specifically, the traveling wave equation  \eqref{traveling},
\[
c(1-\partial_x^2)(4-\partial_x^2)^{-1}\phi-[\frac{1}{2}(\phi)^2+(4-\partial_x^2)^{-1}2k\phi]=0,
\]
can be rewritten as
\[
c(\phi-3\rho)=\frac{1}{2}\phi^2 +2k\rho,
\]
which, after simple rearrangements, yields
\begin{equation}\label{e:rho}
    \rho=\frac{2c\phi -\phi^2 }{6c+4k}.
\end{equation}
Thus, plugging \eqref{e:rho} into the expression of $\widetilde{\psi}$, we have
\[
\widetilde{\psi}=\phi-3\rho=\frac{3\phi+4k}{2(3c+2k)}\phi>0,
\]
which concludes the proof that $\widetilde{\lambda}_*>\lambda_*$. Denoting the $L^2$-normalized eigenfunction of $L_c^\Pi$ with respect to $\widetilde{\lambda}_*$ as $\widetilde{\phi}_*$, there exists $b\in\R$ such that 
\[
L_c\widetilde{\phi}_*=\widetilde{\lambda}_*\widetilde{\phi}_*+b\widetilde{\psi},
\]
or, equivalently,
\[
\widetilde{\phi}_*=b(L_c-\widetilde{\lambda}_*)^{-1}\widetilde{\psi}.
\]
Noting that $\widetilde{\phi}_*\in\caa^\prime$ imposes the constraint $(\widetilde{\psi}, \widetilde{\phi}_*)=0$, we introduce the scalar function 
\[
g(\lambda)\triangleq ((L_c-\lambda)^{-1}\widetilde{\psi}, \widetilde{\psi}),
\]
which is real analytic for $\lambda\not\in\sigma(L_c)\backslash\{0\}$ and $g(\lambda)=0$ if and only if $\lambda$ is an eigenvalue of $L_c^\Pi$. Moreover, $g$ is strictly increasing on connected smooth intervals, thanks to the fact that 
\[
g^\prime(\lambda)=\Vert(L_c-\lambda)^{-1}\widetilde{\psi})\Vert^2_{L^2(\R)}\geq 0.
\] More specifically, denoting the smallest element of $\sigma(L_c)\backslash\{0,\lambda_*\}$ as $\lambda_1>0$, we have $g(\lambda)$ is strictly increasing on $(\lambda_*, \lambda_1)$ and admits $\lambda_*$ as a pole. Consequently, 
\[
\widetilde{\lambda}_*>0 \quad \text{if and only if} \quad g(0)<0.
\]
To see that $g(0)<0$, we recall that $\phi$ is a critical point of the Lagrangian $Q_c$, that is,
\[
\frac{\delta Q_c}{\delta u}(\phi)=\frac{\delta H}{\delta u}(\phi)+c\frac{\delta S}{\delta u}(\phi)=0,
\]
which, taken derivative with respect to $c$ to both sides, yields,
\[
L_c\partial_c\phi=-\frac{\delta S}{\delta u}(\phi)=-(1-\partial_x^2)(4-\partial_x^2)^{-1}\phi=-\widetilde{\psi}.
\]
As a result, we have
\[
g(0)=(L_c^{-1}\widetilde{\psi}, \widetilde{\psi})=-(\partial_c\phi, \frac{\delta S}{\delta u}(\phi))=-\frac{d}{d c}S(\phi).
\]
We then conclude from Lemma \ref{l:convex} that $g(0)<0$ and thus $\widetilde{\lambda}_*>0 $.
\end{proof}

\begin{proof}[Proof of Proposition \ref{ineq}] The proof has been given above.
\end{proof}

\bigskip

\noindent{\bf Acknowledgments}  The work of Li is partially supported by the NSFC grant 11771161. The work of Liu is partially supported by the Simons Foundation  grant 499875. The work of Wu is partially supported by the NSF grant DMS-1815079.


\bibliographystyle{plain}

\end{document}